\theoremstyle{plain}
\newtheorem{theorem}{Theorem}
\newtheorem{lemma}[theorem]{Lemma}
\theoremstyle{definition}
\newtheorem{definition}{Definition}
\newtheorem{proposition}{Proposition}
\newtheorem{corollary}{Corollary}
\newtheorem{example}{Example}
\theoremstyle{remark}
\newtheorem{remark}{Remark}
\begin{document}
%%   Here starts the topmatter.
%% Text must not happen before \maketitle command!

\title{Convexity and Thimm's Trick}

\author{Jeremy Lane}
\address{Dept.\ of Mathematics, University of Toronto, 40 St.\ George Street,
Toronto Ontario M5S 2E4, Canada}
\email{jeremy.lane@mail.utoronto.ca}

\date{\today}

\keywords{convexity, momentum map, Gelfand-Zeitlin, bending flow, sweeping map, integrable system, toric manifold, Gromov width}

\subjclass[2010]{53D20, 37J35, 52Axx}

\maketitle

\begin{abstract}
	In this paper we study topological properties of maps constructed by Thimm's trick with Guillemin and Sternberg's action coordinates on a connected Hamiltonian $G$-manifold $M$. Since these maps only generate a Hamiltonian torus action on an open dense subset of $M$, convexity and fibre-connectedness of such maps do not follow immediately from Atiyah-Guillemin-Sternberg's convexity theorem, even if $M$ is compact.  The core contribution of this paper is to provide a simple argument circumventing this difficulty.
			
	In the case where the map is constructed from a chain of subalgebras we prove that the image is given by a list of inequalities that can be computed explicitly in many examples. This generalizes the fact that the images of the classical  Gelfand-Zeitlin systems on coadjoint orbits are Gelfand-Zeitlin polytopes.  Moreover, we prove that if such a map generates a completely integrable torus action on an open dense subset of $M$, then all its fibres are smooth embedded submanifolds.
\end{abstract}

\section{Introduction}

A connected symplectic manifold $(M,\omega)$ equipped with a Hamiltonian action of a compact torus $T$ generated by a momentum map $\mu\colon M \rightarrow \mathfrak{t}^*$ is a \emph{proper Hamiltonian $T$-manifold}\footnote{
This differs slightly from the definition of proper Hamiltonian $T$-manifolds given in \cite{karshon-tolman}, which requires that the convex subset of $\mathfrak{t}^*$ is also open. This extra assumption is not necessary in the context of this paper.} 
if $\mu$ is proper as a map to a convex subset of $\mathfrak{t}^*$. The convexity theorem for proper Hamiltonian torus manifolds says that if $(M,\omega,\mu)$ is a proper Hamiltonian $T$-manifold, then $\mu(M)$ is convex, the fibres of $\mu$ are connected, and $\mu$ is open as a map to its image (cf. \cite[Theorem 30]{kb} or \cite{bor1}). If in addition the action of $T$ on $M$ is effective and completely integrable, then  $(M,\omega,\mu)$ is called a \emph{proper toric $T$-manifold}. It follows from \cite[Theorem 1.3]{kl} that proper toric $T$-manifolds are classified up to isomorphism by $\mu(M)$ together with the weight lattice of the torus.  If $M$ is compact, then this classification reduces to Delzant's theorem \cite{delzant}. 

Let $G$ be a compact connected Lie group. It was observed by Guillemin and Sternberg that a collective integrable system constructed by Thimm's trick on a Hamiltonian $G$-manifold $(M,\omega,\Phi)$ admits natural action coordinates on an open dense subset $\mathcal{U} \subseteq M$ \cite{gs1}. The most important examples  of this construction are the classical Gelfand-Zeitlin\footnote{ One should note that there are multiple spellings of Zeitlin in the literature, including Tsetlin and Cetlin. Works by Kostant-Wallach, Kogan-Miller, and Guillemin-Sternberg respectively each use a different spelling.} systems on $U(n)$ and $SO(n)$ coadjoint orbits for which Guillemin and Sternberg's action coordinates generate effective completely integrable Hamiltonian torus actions on open dense subsets\footnote{This is proven for regular coadjoint orbits in \cite{gs1}. A proof for arbitrary coadjoint orbits can be found in \cite{pabiniak}.}. The images of the classical Gelfand-Zeitlin systems were shown in \cite{gs1} to be convex polytopes defined by the interlacing inequalities for eigenvalues of Hermitian matrices, which are called \emph{Gelfand-Zeitlin polytopes}.

In \cite[Proposition 3.1]{pabiniak1}, it is claimed that for $U(n)$ and $SO(n)$ coadjoint orbits the open dense subsets where the classical Gelfand-Zeitlin systems define Hamiltonian torus actions are 
proper toric manifolds\footnote{The definition of proper Hamiltonian $T$-manifold given in \cite{pabiniak1} is that of \cite{karshon-tolman}, but the assumption of openness for the convex subset of $\mathfrak{t}^*$ is actually not needed in the context of \cite{pabiniak1}. } and this claim is combined with the classification of proper toric manifolds to deduce tight lower bounds for $U(n)$ and $SO(n)$ coadjoint orbits' Gromov width from the geometry of their Gelfand-Zeitlin polytopes.  Unfortunately, a review of the literature cited in \cite{pabiniak1} and related work does not yield a direct explanation of why these open dense subsets are proper Hamiltonian torus manifolds. 

In this paper we describe the general construction of a map $F$ by ``Thimm's trick with Guillemin and Sternberg's action coordinates'' (Equation \eqref{continuous map}). We observe that if $(M,\omega,\Phi)$ is a connected Hamiltonian $G$-manifold, then it follows from properties of Hamiltonian $G$-manifolds that the open dense subset $\mathcal{U}\subseteq M$ where $F$ generates a Hamiltonian action of a big torus $T'$ is connected (Lemma \ref{connected}). For a classical Gelfand-Zeitlin system on a $U(n)$ or $SO(n)$ coadjoint orbit, it then follows that the open dense subset where $F$ defines an effective, completely integrable torus action is a proper toric manifold (Example \ref{gz system}).  More generally, we prove the following theorem.

\begin{theorem}\label{main theorem} Suppose that $(M,\omega, \Phi)$ is a connected Hamiltonian $G$-manifold and $F$ is a map constructed on $M$ by Thimm's trick with Guillemin and Sternberg's action coordinates.  If $F$ is proper, then 
\begin{enumerate}
	\item $(\mathcal{U}, \omega\vert_{\mathcal{U}},F\vert_{\mathcal{U}})$ is a proper Hamiltonian $T'$-manifold (where $T'$ is the big torus mentioned above, see the discussion preceding Lemma \ref{connected} for details of the definition of $T'$),
	\item $F(M)$ is convex, and
	\item The fibres of $F$ are connected.
\end{enumerate}
\end{theorem}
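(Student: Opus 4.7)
The plan is to deduce the theorem from the classical convexity theorem for proper Hamiltonian torus manifolds applied to the open dense subset $\mathcal{U}$, and then to transfer its conclusions from $F|_\mathcal{U}$ to $F$ via continuity and density. By Lemma~\ref{connected}, $\mathcal{U}$ is a connected symplectic manifold on which $F|_\mathcal{U}$ is a genuine momentum map for a Hamiltonian $T'$-action, so the setting for the classical convexity theorem is in place.

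The heart of the argument is establishing conclusion~(1), which amounts to identifying a convex subset $C \subseteq (\mathfrak{t}')^*$ containing $F(\mathcal{U})$ for which $F|_\mathcal{U} \colon \mathcal{U} \to C$ is proper. Guided by the Gelfand--Zeitlin paradigm, where the singular set $M \setminus \mathcal{U}$ maps precisely to the boundary of the Gelfand--Zeitlin polytope, I would take $C$ to be the relative interior of $F(M)$ and verify that $F(M \setminus \mathcal{U})$ is contained in the relative boundary of $F(M)$, so that for compact $K \subseteq C$ the preimage $F^{-1}(K)$ automatically lies in $\mathcal{U}$ and hence is compact. This should follow from a direct analysis of the Thimm's trick structure of $F$: the points of $M \setminus \mathcal{U}$ are precisely those where an action coordinate in the chain degenerates, and this degeneration forces $F(p)$ to satisfy a wall equation of the image.

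Granted~(1), the classical convexity theorem gives that $F(\mathcal{U})$ is convex, the fibres of $F|_\mathcal{U}$ are connected, and $F|_\mathcal{U}$ is open onto its image. Continuity of $F$ and density of $\mathcal{U}$ then give $F(M) \subseteq \overline{F(\mathcal{U})}$, while properness of $F$ gives $F(M)$ closed, so $F(M) = \overline{F(\mathcal{U})}$ is the closure of a convex set and is therefore convex, proving~(2). For~(3), fix $y \in F(M)$: when $y$ lies in the relative interior of $F(M)$, the fibre $F^{-1}(y)$ is contained in $\mathcal{U}$ and hence connected by~(1); for boundary points $y$, I would approximate $y$ by $y_n \in F(\mathcal{U})$ with $y_n \to y$ and use properness together with a Hausdorff-limit argument on the compact connected sets $F^{-1}(y_n)$ to deduce that $F^{-1}(y)$ is connected. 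The main technical obstacle throughout is the analysis of the stratum $M \setminus \mathcal{U}$ and its image under $F$: this both secures properness for step~(1) and underwrites the limit argument needed in~(3); once this local analysis is in hand, the rest of the proof is routine transferral along $\mathcal{U} \hookrightarrow M$.
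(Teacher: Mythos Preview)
Your overall strategy---apply the convexity theorem for proper Hamiltonian torus manifolds to $\mathcal{U}$ and then transfer the conclusions to $M$---matches the paper's. But there are two genuine gaps in your execution.

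\textbf{Part (1): the convex set.} You propose $C = \text{rel-int}(F(M))$ and then argue that $F(M\setminus\mathcal{U})$ lies in the relative boundary of $F(M)$. This is circular: to use the convexity theorem you need $C$ convex, but convexity of $\text{rel-int}(F(M))$ is essentially convexity of $F(M)$, which is conclusion~(2). Moreover, the ``wall equation of the image'' language presupposes that $F(M)$ has walls, i.e.\ is locally polyhedral, which is also not yet known. The paper avoids all of this by observing that \emph{by definition}
\[
\mathcal{U} = F^{-1}(\sigma_1 \times \cdots \times \sigma_d),
\]
and $\sigma_1 \times \cdots \times \sigma_d$ is already convex (each $\sigma_k$ is a relatively open face of a Weyl chamber, hence convex). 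Since $\mathcal{U}$ is the full preimage of this convex set under the proper map $F$, the restriction $F|_{\mathcal{U}}\colon \mathcal{U}\to \sigma_1\times\cdots\times\sigma_d$ is automatically proper. No analysis of $M\setminus\mathcal{U}$ is needed at all; this is the ``simple argument'' advertised in the abstract, and you have missed it.

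\textbf{Part (3): the limit argument.} A Hausdorff-limit argument on $F^{-1}(y_n)$ only produces a compact connected subset of $F^{-1}(y)$; it does not by itself show that this subset exhausts the fibre. If $F^{-1}(y) = A\sqcup B$ with $A,B$ compact and disjoint, it is perfectly possible for every $F^{-1}(y_n)$ to lie entirely in a neighbourhood of $A$ and never see $B$. What rules this out is the \emph{openness} of $F|_{\mathcal{U}}\colon\mathcal{U}\to F(\mathcal{U})$: any open set in $M$ meeting $B$ also meets $\mathcal{U}$ (by density), and its image under $F$ is then open in $F(\mathcal{U})$, so it contains values arbitrarily close to $y$. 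The paper packages exactly this argument into Lemma~\ref{marginal connectedness}, whose hypotheses are that $\mathcal{U}$ is dense and saturated by $F$, that $F(\mathcal{U})$ is convex, that the fibres of $F|_{\mathcal{U}}$ are connected, and that $F|_{\mathcal{U}}$ is open onto its image---all of which follow from (1) and the convexity theorem. Your sketch gestures at the right ingredients but does not use openness where it is actually needed, and as written the argument is incomplete.
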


We also prove a straightforward generalization of the fact that the images of the classical Gefland-Zeitlin systems are Gelfand-Zeitlin polytopes: if $(M,\omega,\Phi)$ is a connected Hamiltonian $G$-manifold with $\Phi$ proper, then the image of the map $F$ constructed by Thimm's trick with Guillemin and Sternberg's action coordinates from a chain of subalgebras 
$$ \mathfrak{h}_1 \subseteq \ldots \subseteq \mathfrak{h}_k = \mathfrak{g}$$
is the locally polyhedral set defined by the inequalities of the momentum set $\Phi(M) \cap \mathfrak{t}_+^*$ and the branching inequalities corresponding to the chain of subalgebras (Proposition \ref{branching system}). Moreover, if the torus action generated by $F$ on an open dense subset is completely integrable, then we prove that the fibres of $F$ are embedded submanifolds (Proposition \ref{smooth fibres}).

If $(M,\omega,\Phi)$ is a multiplicity free Hamiltonian $U(n)$ or $SO(n)$-manifold, then the map $F$ constructed by Thimm's trick with Guillemin and Sternberg's action coordinates from chains of subalgebras
\begin{equation}\label{gelfand-zeitlin chains}
	\mathfrak{u}(1) \subseteq \mathfrak{u}(2) \subseteq  \cdots \subseteq \mathfrak{u}(n) \mbox{ or } \mathfrak{so}(2) \subseteq \mathfrak{so}(3) \subseteq  \cdots \subseteq \mathfrak{so}(n)
\end{equation}
respectively generates a completely integrable Hamiltonian torus action on an open dense subset of $M$. If $\Phi$ is proper, then it follows by Theorem \ref{main theorem} that this open dense subset is a proper, completely integrable Hamiltonian torus manifold (in general, the torus action may not be effective) and by Proposition \ref{branching system} one has an explicit description of the image of $F$.  This can then be used to prove explicit lower bounds for the Gromov widths of a much larger family of symplectic manifolds than was studied in \cite{pabiniak1} (namely multiplicity free $U(n)$ and $SO(n)$-manifolds with proper momentum maps) \cite{lane1}. Unfortunately, applying Thimm's trick to similar chains of subalgebras for groups other than $U(n)$ and $SO(n)$ does not yield completely integrable torus actions \cite[p. 225]{gs3}, although one should note that Harada was able to extend the construction of classical Gelfand-Zeitlin systems to construct completely integrable systems on $Sp(n)$ coadjoint orbits in \cite{harada}.  

Nishinou-Nohara-Ueda proved that the classical Gelfand-Zeitlin systems on $U(n)$ coadjoint orbits can be constructed by toric degeneration \cite{nnu}.  This was later generalized by Harada-Kaveh who proved that, under various technical assumptions, a toric degeneration of a smooth projective variety endows an open dense subset with the structure of a proper toric manifold \cite[Theorem B]{harada-kaveh}.  These results  were applied in \cite{kaveh} to prove lower bounds for the Gromov width of smooth projective varieties in terms of their Newton-Okounkov bodies and in \cite{halacheva-pabiniak} and \cite{flp} to finish the proof of tight lower bounds for the Gromov width of coadjoint orbits of compact simple Lie groups (tight upper bounds were proven \cite{castro}).

In contrast to toric degeneration, the construction of Hamiltonian torus actions by Thimm's trick with Guillemin and Sternberg's action coordinates 
\begin{itemize}
	\item does not require the manifold to be projective or even K\"ahler and
	\item typically does not yield a completely integrable torus action.
\end{itemize}
Recently, Hilgert-Martens-Manon described a symplectic analogue of toric degeneration called ``symplectic contraction'' and showed that maps constructed via symplectic contraction are the same as the maps constructed by Thimm's trick \cite{hmm}. The relation between this paper and the results of \cite{hmm} is discussed in Section \ref{s:symplectic contraction}.

The contents of this paper are as follows. In Section 2 we recall basic definitions and results pertaining to Hamiltonian group actions. Section 3 studies the details of Thimm's trick with Guillemin and Sternberg's action coordinates and gives the proof of Theorem \ref{main theorem}. Section 3 also contains a brief subsection describing how maps constructed by Thimm's trick interact with symplectic reduction.  In Section 4 we recall the general branching inequalities, multiplicity free $G$-manifolds, and prove Proposition \ref{branching system} and Proposition \ref{smooth fibres}. In Section 5 we illustrate the Thimm construction and applications in symplectic topology with several low-dimensional examples.  In Section \ref{s:symplectic contraction} we discuss the related work by \cite{hmm}.

The author would like to thank Yael Karshon for suggesting the study of convexity for Gelfand-Zeitlin systems and our many discussions. The author  would also like to thank the referees for their helpful comments and noticing a gap in an earlier proof of Proposition \ref{thimm 1}. The author was supported by a NSERC PGSD scholarship during work on this paper.

%%%%%%%%%%%%%%%%%%%%
\section{Hamiltonian Group Actions}\label{s:hamiltonian group actions}
%%%%%%%%%%%%%%%%%%%%

%%%%%%%%%%%%%%%%%%%%
\subsection{Basic Definitions}\label{s:basic definitions}
%%%%%%%%%%%%%%%%%%%%

Let $G$ be a compact, connected Lie group with Lie algebra $\mathfrak{g}$. We write 
\begin{equation}
  \langle-,-\rangle:\mathfrak{g}^*\times \mathfrak{g} \rightarrow \mathbb{R}
\end{equation}
for the dual pairing, $Ad_{g}X$ for the adjoint action of $g \in G$ on $\mathfrak{g}$, and $Ad_{g}^*\xi$ for the coadjoint action of $G$ on $\mathfrak{g}^*$.
Given an action of $G$ on a manifold $M$, let $\underline{X}$ denote the fundamental vector field of $X\in \mathfrak{g}$. A manifold $M$ is \emph{symplectic} if it is equipped with a closed, non-degenerate 2-form $\omega$.  Recall,

%%%%%%%%%%%
% Define Hamiltonian group action
\begin{definition} 
An action of $G$ on a symplectic manifold $(M,\omega)$ is \emph{Hamiltonian} if there is an equivariant map $\Phi: M \rightarrow \mathfrak{g}^*$
such that 
\begin{equation}\label{momentum map}
  \iota_{\underline{X}}\omega = d\langle \Phi,X\rangle
\end{equation}
for all $X \in \mathfrak{g}$. If this is the case, then $\Phi$ is called a \emph{momentum map} for the action and $(M, \omega,\Phi)$ is called a \emph{Hamiltonian} $G$\emph{-manifold}.
\end{definition}

Given a function $f\in C^{\infty}(M)$, the \emph{Hamiltonian vector field} $X_f$ is defined by Hamilton's equation
\begin{equation}\label{hamiltons equation}
  \iota_{X_f}\omega = df.
\end{equation}

%%%%%%%%%%%
% Define Poisson bracket and Poisson map
\begin{definition} 
A \emph{Poisson bracket} on $C^{\infty}(M)$ is a Lie bracket $\{\cdot,\cdot\}$ such that $\{f,\cdot\}$ is a derivation for all $f\in C^{\infty}(M)$. A map between manifolds with Poisson brackets, $\Phi:(M,\{\cdot,\cdot\}) \rightarrow(M',\{\cdot,\cdot\}')$, is \emph{Poisson} if for all $f,g \in C^{\infty}(M')$,
\begin{equation}
  \{f\circ \Phi,g\circ \Phi \} = \{f,g\}'\circ \Phi.
\end{equation}
Two functions $f,g \in C^{\infty}(M)$ are said to \emph{Poisson commute} if $\{f,g\} = 0$.
\end{definition} 

%%%%%%%%%%%
% Standard Poisson bracket
The Poisson bracket on a symplectic manifold is
\begin{equation}
  \left\{ f,g \right\} = \omega\left(X_f, X_g\right).
\end{equation}
The Kostant-Kirillov-Souriou Poisson bracket on $\mathfrak{g}^*$, is defined by
\begin{equation}
  \left\{ f,g\right\}_{\xi} = \langle \xi , [df_{\xi},dg_{\xi}]\rangle
\end{equation}
where the linear functional $df_{\xi}:T_{\xi}\mathfrak{g}^* = \mathfrak{g}^*\rightarrow \mathbb{R}$ is identified with an element of $\mathfrak{g}$.  The symplectic leaves of this Poisson bracket are the coadjoint orbits which are equipped with the Kostant-Kirillov-Souriau symplectic form. If $\Phi: M \rightarrow \mathfrak{g}^*$ is a momentum map for a Hamiltonian $G$-action, then $\Phi$ is Poisson with respect to these brackets \cite{audin}.

%%%%%%%%%%%%%%%%%%%%
\subsection{Properties of the Sweeping Map}\label{s:properties of the sweeping map}
%%%%%%%%%%%%%%%%%%%%

In this section we recall important facts about the sweeping map. These appear in various places, such as \cite{gs5, cdm, hnp} along with a generalization to orbifolds in \cite{lmtw}.

%%%%%%%%%%%
% Set notation
Fix a choice of maximal torus $T \subseteq G$ with Lie algebra $\mathfrak{t}$. Let $\mathfrak{t}_+$ be a choice of closed\footnote{By this we mean that $\mathfrak{t}_+$ is the closed polyhedral cone in $\mathfrak{t}$ defined as an intersection of closed half-spaces corresponding to reflections generating of the Weyl group.} positive Weyl chamber. Since $G$ is compact there is a non-degenerate, positive definite, bilinear form $(-,-)$ on $\mathfrak{g}$ which is invariant under the adjoint action of $G$.  The map $X \mapsto (X,-)$ is a $G$-equivariant vector space isomorphism of $\mathfrak{g}$ with $\mathfrak{g}^*$, with respect to the adjoint and coadjoint actions.  We also call the image of $\mathfrak{t}_+$ under this map a \emph{positive Weyl chamber} and denote it by $\mathfrak{t}_+^*$. The positive Weyl chamber is a fundamental domain for the coadjoint action of $G$.

%%%%%%%%%%%
% sweeping map
\begin{definition}\label{sweeping}
 Let $G$ be a compact, connected Lie group and let $\mathfrak{t}_+^*$ be a positive Weyl chamber. The \emph{sweeping map} $s:\mathfrak{g}^* \rightarrow \mathfrak{t}_{+}^*$ is defined by letting $s(\xi)$ be the unique element of the set $\left(G\cdot \xi\right) \cap \mathfrak{t}^*_+$. 
\end{definition}

The sweeping map is continuous and induces a homeomorphism $\mathfrak{g}^*/G \cong \mathfrak{t}_+^*$.  If $\sigma$ is a stratum\footnote{$\mathfrak{t}_+^*$ has a natural stratification as a polyhedral set. The maximal stratum is the interior of $\mathfrak{t}_+^*$ and the lower dimensional strata are the relative interiors of the intersections of the faces of $\mathfrak{t}_+^*$.} of the  polyhedral cone $\mathfrak{t}_+^*$, then $\Sigma_{\sigma} = G\cdot \sigma$ is a connected component of an orbit-type stratum in $\mathfrak{g}^*$ and the restricted map $s: \Sigma_{\sigma} \rightarrow \sigma$ is smooth. We recall the following detail of the symplectic cross-section theorem (see e.g. \cite[Theorem 3.1]{lmtw}).

%%%%%%%%%%%
% CDM principal face 

  \begin{theorem}\label{principal stratum}
    Let $(M,\omega,\Phi)$ be a connected Hamiltonian $G$-manifold.  There exists a unique stratum $\sigma \subseteq \mathfrak{t}_{+}^*$ with the property that $\Phi(M) \cap \mathfrak{t}_{+}^* \subseteq \overline{\sigma}$ and $\Phi(M) \cap \sigma$ is non-empty.
  \end{theorem}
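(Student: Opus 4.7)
The plan is to use the orbit-type stratification of $M$ induced via $\Phi$ from the stratification of $\mathfrak{g}^*$. For each stratum $\sigma$ of $\mathfrak{t}_+^*$, set $M_\sigma := \Phi^{-1}(\Sigma_\sigma)$. These locally closed subsets partition $M$, and by the symplectic cross-section theorem (for example \cite[Theorem 3.1]{lmtw}) they coincide with unions of orbit-type strata of the $G$-action; in particular each $M_\sigma$ is a submanifold. The chosen $\sigma$ will be the one for which $M_\sigma$ is open and dense in $M$, which exists and is unique by the principal orbit-type theorem for proper actions together with the connectedness of $M$.

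Having selected this $\sigma$, I would verify the two properties directly. For the inclusion $\Phi(M)\cap\mathfrak{t}_+^*\subseteq\overline{\sigma}$: since $M_\sigma$ is dense and $\Phi$ is continuous,
\[ \Phi(M) \subseteq \overline{\Phi(M_\sigma)} \subseteq \overline{\Sigma_\sigma}, \]
and intersecting with the fundamental domain $\mathfrak{t}_+^*$ gives $\overline{\Sigma_\sigma}\cap\mathfrak{t}_+^* = \overline{\sigma}$, which yields the required inclusion. Non-emptiness of $\Phi(M)\cap\sigma$ is immediate from $M_\sigma\neq\emptyset$.

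For uniqueness, suppose $\sigma'$ were another stratum with the same two properties. Applying the first property for $\sigma$ to a point of $\Phi(M)\cap\sigma'$ gives $\sigma' \subseteq \overline{\sigma}$, and symmetrically $\sigma \subseteq \overline{\sigma'}$. Since the face relation $\tau\leq\tau'\iff\tau\subseteq\overline{\tau'}$ is a partial order on the strata of the polyhedral cone $\mathfrak{t}_+^*$, antisymmetry forces $\sigma=\sigma'$.

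The one step requiring genuine input rather than formal manipulation is identifying the stratification $\{M_\sigma\}$ with the orbit-type stratification so that the principal-orbit-type theorem applies; this is precisely what the symplectic cross-section theorem packages, so I expect to cite \cite{lmtw} here rather than reprove it. Once that identification is in hand, the rest of the argument is a short exercise in point-set topology of stratified spaces.
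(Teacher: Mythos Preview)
The paper does not give its own proof of this theorem; it simply recalls the statement and cites \cite[Theorem 3.1]{lmtw}. So the relevant question is whether your sketch stands on its own.

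There is a genuine gap at precisely the step you flag as the crucial one. The sets $M_\sigma = \Phi^{-1}(\Sigma_\sigma)$ are \emph{not} in general unions of orbit-type strata for the $G$-action on $M$, so the principal-orbit-type theorem does not apply to the decomposition $\{M_\sigma\}$. A clean counterexample is $M = T^*G$ with the cotangent lift of left multiplication. This action is free, so the orbit-type stratification of $M$ consists of a single stratum, namely all of $M$. Yet $\Phi_{\mathcal{L}}^{-1}(0)$ is the zero section, so $M_{\{0\}}$ is a non-empty proper subset of $M$ and the decomposition $\{M_\sigma\}$ is strictly finer than the (trivial) orbit-type stratification. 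In particular no $M_\sigma$ is a union of orbit-type strata here.

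The symplectic cross-section theorem does not package the identification you describe, because that identification is false. What the cross-section theorem and local normal form actually give is the local structure of $\Phi$: near a point $m$ with $\Phi(m)\in\tau$, one can read off that there is a unique stratum $\sigma\geq\tau$ for which $M_\sigma$ is open in a neighbourhood of $m$ (and the remaining $M_{\tau'}$ sit in positive codimension there). Connectedness of $M$ then forces this locally determined $\sigma$ to be globally constant. That is the route taken in \cite{lmtw}. Once existence of an open dense $M_\sigma$ is established in this way, your remaining steps---the inclusion $\Phi(M)\cap\mathfrak{t}_+^*\subseteq\overline{\sigma}$ from density and continuity, and uniqueness via antisymmetry of the face order---are correct.
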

  The unique stratum $\sigma$ of the preceding theorem is called the \emph{principal stratum} of $M$. Unpacking the details of the proof of Theorem \ref{principal stratum}, one has the following proposition which will be useful in the proof of Theorem \ref{main theorem}.

  \begin{proposition}\label{preimage of open face}
     Let $\sigma$ be the principal stratum of a connected Hamiltonian $G$-manifold $(M,\omega,\Phi)$. The pre-image $\Phi^{-1}(\Sigma_{\sigma})$ is a connected, dense open submanifold of $M$ and its complement is contained in a locally finite union of submanifolds of codimension at least 2 in $M$.
  \end{proposition}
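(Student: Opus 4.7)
The plan is to prove the three assertions—openness, a codimension bound on the complement, and connectedness—in that order, each feeding the next. For openness, note that the sweeping map $s \colon \mathfrak{g}^* \to \mathfrak{t}_+^*$ is continuous, so $s \circ \Phi \colon M \to \mathfrak{t}_+^*$ is continuous as well. By Theorem \ref{principal stratum}, the image $s(\Phi(M)) = \Phi(M) \cap \mathfrak{t}_+^*$ lies in the closed face $\overline{\sigma}$, while $\sigma$ is by definition the relative interior of that face and is therefore relatively open in it. Consequently
\[
  \Phi^{-1}(\Sigma_\sigma) \;=\; (s \circ \Phi)^{-1}(\sigma)
\]
is open in $M$, and being an open subset it is automatically an embedded submanifold.

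For the codimension bound, the complement $M \setminus \Phi^{-1}(\Sigma_\sigma) = \Phi^{-1}(\overline{\Sigma_\sigma} \setminus \Sigma_\sigma)$ is the preimage of the finitely many proper faces of $\overline{\sigma}$ swept around by $G$. For each such face $\sigma'$, the symplectic cross-section theorem represents $\Phi^{-1}(\Sigma_{\sigma'})$ locally as a union of orbit-type strata of the $G$-action on $M$. Orbit-type strata of a Hamiltonian $G$-action are symplectic submanifolds, so their codimensions are even; because $\sigma'$ is strictly lower-dimensional than the principal stratum $\sigma$, each such piece has strictly positive codimension, hence codimension at least $2$. Summing over the finitely many proper faces produces the required locally finite cover.

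For connectedness, combining the codimension bound with the connectedness of $M$ finishes the proof: a locally finite union of submanifolds of codimension at least $2$ in a connected manifold cannot separate it, since any smooth path between two points may be transversally perturbed to avoid each such submanifold. Hence any two points of $\Phi^{-1}(\Sigma_\sigma)$ are joined by a path contained in $\Phi^{-1}(\Sigma_\sigma)$.

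The main obstacle is the codimension bound in step two. One must correctly identify each $\Phi^{-1}(\Sigma_{\sigma'})$ with a finite union of orbit-type strata, verify that these are symplectic submanifolds (so that the codimension is even), and then appeal to the principality of $\sigma$ to rule out codimension zero. This rests on the local normal form underlying the symplectic cross-section theorem, together with the fact that the principal orbit type on $M$ corresponds precisely to the principal face $\sigma$.
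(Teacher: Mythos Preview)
Your Steps 1 and 3 are fine, and density (which you do not mention explicitly) follows immediately once the codimension bound in Step 2 is established. The paper itself does not give a proof; it simply remarks that the proposition follows by ``unpacking the details of the proof'' of the principal cross-section theorem in \cite{lmtw}, which ultimately rests on the Marle--Guillemin--Sternberg local normal form.

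The genuine gap is Step 2. Your claim that $\Phi^{-1}(\Sigma_{\sigma'})$ is, even locally, a union of orbit-type strata of the $G$-action on $M$ is false, and so is the concluding assertion that ``the principal orbit type on $M$ corresponds precisely to the principal face $\sigma$.'' Take $G=SU(2)$ acting on $M=T^*SU(2)$ by the cotangent lift of left translation. This action is \emph{free}, so there is a single orbit-type stratum, namely all of $M$; yet the principal face is $\sigma=(\mathfrak{t}_+^*)^{\mathrm{int}}$ and $\Phi^{-1}(\Sigma_{\{0\}})=\Phi^{-1}(0)$ is the zero section $SU(2)\times\{0\}$, a proper (Lagrangian, not symplectic) submanifold of codimension $3$. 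So neither the identification with orbit-type strata nor the ``symplectic, hence even codimension'' mechanism is available.

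The argument that actually works is the one behind \cite[Theorem 3.1]{lmtw}. At a point $m$ with $\Phi(m)=\xi\in\tau<\sigma$, apply the symplectic cross-section to reduce to the Hamiltonian $G_\tau$-manifold $Y_\tau=\Phi^{-1}(S_\tau)$, and then the local normal form for the $G_\tau$-action at $m$. In that model the condition that $s\circ\Phi$ lands in a wall of $\overline{\sigma}$ is expressed through the vanishing of components of the moment map along root spaces of $\mathfrak{g}_\tau$. Since each real root space is two-dimensional, the bad locus is locally contained in a finite union of linear subspaces of codimension at least $2$. That is where the ``$\geq 2$'' comes from---from the root-space structure of the coadjoint stratification, not from orbit-type strata of $M$ being symplectic.
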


%%%%%%%%%%%%%%%%%%%%%%
% definition of T'
The stabilizer subgroup $G_{\xi}$ of a point $\xi$ in a stratum $\sigma$ of $\mathfrak{t}_+$ is independent of the point $\xi$, so we refer to it as $G_{\sigma}$. Let $\mathfrak{g}_{\sigma}$ be the Lie algebra of $G_{\sigma}$ and let $\mathfrak{t}_{\sigma} = \mathfrak{z}(\mathfrak{g}_{\sigma})\subseteq \mathfrak{t}$ be its centre. The span of $\sigma$ in $\mathfrak{g}^*$ is the subspace of points $(\mathfrak{g}^*)^{G_{\sigma}}$ fixed under the coadjoint action of $G_{\sigma}$, which is identified with $\mathfrak{t}_{\sigma}^*$ via the inner product on $\mathfrak{g}$.  Let $T_{\sigma}\subseteq T$ be the torus with Lie algebra $\mathfrak{t}_{\sigma}$.

%%%%%%%%%%%%%%
%% Definition of new action
We may use the action of $G$ to define a new action of $T_{\sigma}$ on $(\Phi)^{-1}(\Sigma_{\sigma})$ by letting
\begin{equation}\label{thimm action}
  t \ast m = (g^{-1}tg)\cdot m
\end{equation}
for all $m\in (\Phi)^{-1}(\Sigma_{\sigma})$ and $t\in T_{\sigma}$. Here $g\in G$ is an element such that $Ad_g^*\Phi(m) \in \sigma$, and $(g^{-1}tg)$ is acting on $M$ as an element of $G$. One checks that this action is independent of the choice of $g$ in the coset $G_{\sigma}g$ (since $T_{\sigma}$ is contained in the centre of $G_{\sigma}$). Note that the new action of $T_{\sigma}$ commutes with the action of $G$. 
%%%%%%%%%%%%%%
% Removed because it is no longer used in the proof of the main theorem
%The fundamental vector field for this new $T_{\sigma}$-action can be written in terms of the fundamental vector field for the $G$-action,
%\begin{equation}\label{fundamental vector field}
%    \begin{split}
%    \underline{X}_{new}(m) & =\left. \frac{d}{dt} \text{exp}(tX)\ast m \right|_{t=0}\\
%    & = \left. \frac{d}{dt} b^{-1} \text{exp}(tX)b\cdot m \right|_{t=0}\\
%    & =\left. \frac{d}{dt} \text{exp}(tAd_{b^{-1}}X)\cdot m \right|_{t=0}\\
%    & = \underline{Ad_{b^{-1}}X}_{old}(m).
%    \end{split}
%\end{equation}
%%%%%%%%%%%%%

%%%%%%%%%%%
%% fundamental vector field

In \cite[Theorem 3.4]{gs1}, Guillemin and Sternberg observed that if $\sigma$ is maximal (i.e. $\sigma = (\mathfrak{t}_+^*)^{\text{int}}$), then the new action of $T_{\sigma} = T$ on $\Phi^{-1}(\Sigma_{\sigma})$ is Hamiltonian, generated by $s\circ\Phi$. More generally, if $p_{\sigma} \colon \mathfrak{t}^* \rightarrow \mathfrak{t}_{\sigma}^*$ is the projection dual to the inclusion $\mathfrak{t}_{\sigma} \subseteq \mathfrak{t}$, then we have the following proposition (see \cite[Proposition 3.4]{woodward} for a proof).

\begin{proposition}[Guillemin and Sternberg's action coordinates]\label{pabiniak 2} The new action of $T_{\sigma}$ on $\Phi^{-1}(\Sigma_{\sigma})$ defined by equation \eqref{thimm action} is Hamiltonian and
$$p_{\sigma}\circ s\circ\Phi:\Phi^{-1}(\Sigma_{\sigma})\rightarrow \mathfrak{t}_{\sigma}^*$$
is a momentum map for this action.

\end{proposition}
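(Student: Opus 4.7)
The plan is to verify the momentum map equation for $p_\sigma\circ s\circ\Phi$ at a single point $m$, by trading the Thimm action for a conjugate of the ambient $G$-action via a local smooth lift of the sweeping map. Fix $m \in \Phi^{-1}(\Sigma_\sigma)$. Because $\Sigma_\sigma \cong G \times_{G_\sigma}\sigma$ and the principal bundle $G \to G/G_\sigma$ has local smooth sections, there is a smooth map $g\colon V \to G$ on a neighbourhood $V$ of $m$ in $\Phi^{-1}(\Sigma_\sigma)$ such that $Ad_{g(m')}^*\Phi(m') = s(\Phi(m'))$ lies in $\sigma$ for all $m' \in V$. On $V$ the Thimm action takes the explicit form $t\ast m' = g(m')^{-1} t\, g(m')\cdot m'$, which is manifestly smooth in $(t,m')$; it is independent of the choice of $g(m')$ in its $G_\sigma$-coset because $T_\sigma$ is central in the connected group $G_\sigma$, so the local recipes patch to a well-defined smooth $T_\sigma$-action on $\Phi^{-1}(\Sigma_\sigma)$.

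Differentiating $s \mapsto g(m)^{-1}\exp(sX)g(m) = \exp\bigl(s\,Ad_{g(m)^{-1}}X\bigr)$ for $X\in\mathfrak{t}_\sigma$ identifies the $\ast$-fundamental vector field at $m$ with the $G$-fundamental vector field of $Ad_{g(m)^{-1}}X$:
\begin{equation*}
\underline{X}^\ast_m \;=\; \underline{Ad_{g(m)^{-1}}X}_m.
\end{equation*}
The momentum map equation \eqref{momentum map} for $\Phi$ therefore gives $\iota_{\underline{X}^\ast_m}\omega_m = d\langle \Phi, Ad_{g(m)^{-1}}X\rangle_m$. On the other hand, since $p_\sigma$ is dual to the inclusion $\mathfrak{t}_\sigma\hookrightarrow\mathfrak{t}$ and $X\in\mathfrak{t}_\sigma$, for every $m' \in V$ one has
\begin{equation*}
\langle p_\sigma\circ s\circ\Phi(m'),X\rangle \;=\; \langle Ad_{g(m')}^*\Phi(m'),X\rangle \;=\; \langle \Phi(m'), Ad_{g(m')^{-1}}X\rangle.
\end{equation*}

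Differentiating this last expression at $m'=m$ in a tangent direction $v$ produces the desired term $d\langle\Phi,Ad_{g(m)^{-1}}X\rangle_m(v)$ together with a correction $\langle\Phi(m),\,d_m(m'\mapsto Ad_{g(m')^{-1}}X)(v)\rangle$ coming from the variation of $g$. Writing the derivative of $g^{-1}$ at $m$ in direction $v$ as right translation by $-W\in\mathfrak{g}$, this correction becomes $-\langle\Phi(m),[W,Ad_{g(m)^{-1}}X]\rangle$, which up to sign equals $\langle ad^*_{Ad_{g(m)^{-1}}X}\Phi(m),W\rangle$. The key observation is that this vanishes: $s(\Phi(m))\in\sigma$ is fixed by $G_\sigma\supseteq T_\sigma$, so $ad^*_X s(\Phi(m)) = 0$ for every $X\in\mathfrak{t}_\sigma$, and applying $Ad_{g(m)^{-1}}^*$ yields $ad^*_{Ad_{g(m)^{-1}}X}\Phi(m) = 0$. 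Hence the two differentials agree, and $p_\sigma\circ s\circ\Phi$ is a momentum map for the Thimm action. The main obstacle is that the candidate momentum map is not the pullback of a fixed linear functional on $\mathfrak{g}^*$ but involves the point-dependent sweep, so the derivative of the local lift $g$ produces a correction which must be recognized as trivial precisely by the infinitesimal coadjoint stabilizer condition defining $\mathfrak{t}_\sigma$.
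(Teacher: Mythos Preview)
The paper does not give its own proof of this proposition; it simply refers the reader to \cite[Proposition 3.4]{woodward}. Your direct verification is correct and self-contained. The local smooth lift $g\colon V\to G$ obtained from a section of $G\to G/G_\sigma$ lets you identify the Thimm fundamental vector field $\underline{X}^\ast_m$ with the $G$-fundamental vector field $\underline{Ad_{g(m)^{-1}}X}_m$, and the crucial point---that the correction term coming from the $m'$-dependence of $g$ vanishes---is exactly the infinitesimal stabilizer condition $ad_X^* s(\Phi(m)) = 0$ for $X\in\mathfrak{t}_\sigma$, transported back to $\Phi(m)$ by $Ad_{g(m)^{-1}}^*$. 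This is the right mechanism.

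One small omission: the paper's definition of momentum map requires equivariance, which for a torus means invariance. You should record that $p_\sigma\circ s\circ\Phi$ is $T_\sigma$-invariant; this is immediate since $s$ is $G$-invariant and the Thimm action is through elements of $G$. With that addition your argument is complete.
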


In what follows, we identify $\mathfrak{t}_{\sigma}^*$ with the subspace of $\mathfrak{t}^*$ spanned by $\sigma$ so that $p_{\sigma} \circ s\circ\Phi = s\circ\Phi$. 

%%%%%%%%%%%%%%%%%%%
\section{Thimm's Trick}\label{s:thimms trick}
%%%%%%%%%%%%%%%%%%%

\subsection{Thimm's trick}

A completely integrable system on a symplectic manifold $(M,\omega)$ of dimension $2n$ is a set of $n$ Poisson commuting functions $f_1, \ldots, f_n \in C^{\infty}(M)$ such that the map 
	\begin{equation}
		(f_1, \ldots, f_n) : M \rightarrow \mathbb{R}^n
	\end{equation}
	is a submersion on an open dense subset of $M$.  A classical problem in the study of integrable systems is the construction of completely integrable systems on a Hamiltonian $G$-manifold from collective functions, where a function on a Hamiltonian $G$-manifold $(M,\omega,\Phi)$ is \emph{collective} if it is of the form $\Phi^*f$, for some $f\in C^{\infty}(\mathfrak{g}^*)$.  Since $\Phi$ is Poisson, if two functions $f,g\in C^{\infty}(\mathfrak{g}^*)$ commute with respect to the Kostant-Kirillov-Souriau Poisson bracket, then their pullbacks  $\Phi^*f$, $\Phi^*g$ commute with respect to the Poisson bracket on $M$.  
	
	Thus one may construct a completely integrable system of collective functions on $M^{2n}$ by constructing $n$ independent Poisson commuting functions $f_1, \ldots ,f_n \in C^{\infty}(\mathfrak{g}^*)$. By Chevalley's theorem, one can find $\text{rank}(G)$ independent functions in the ring of $Ad^*$-invariant functions, $C^{\infty}(\mathfrak{g}^*)^G$, and these functions Poisson commute, but it is often the case that $\text{rank}(G)< n$.  In order to find additional independent Poisson commuting functions on $\mathfrak{g}^*$, Thimm proved the following proposition \cite[Proposition 4.1]{thimm}.
	
	\begin{proposition}[Thimm's Trick]\label{thimm}  Let $G$ be a compact connected Lie group and let $\mathfrak{h}_1$, $\mathfrak{h}_2$ be two subalgebras of $\mathfrak{g}$ with dual projection maps $p_i: \mathfrak{g}^* \rightarrow \mathfrak{h}_i^*$, $i = 1,2$. If $h_1 \in C^{\infty}(\mathfrak{h}_1^*)^{H_1}$ and $h_2 \in C^{\infty}(\mathfrak{h}_2^*)$, then the Poisson bracket of $p_1^*h_1$ and $p_2^*h_2$ vanishes identically on $\mathfrak{g}^*$ if 
	\begin{equation}\label{condition}
		[\mathfrak{h}_1, \mathfrak{h}_2] \subseteq \mathfrak{h}_1.
	\end{equation}
	This holds in particular if $\mathfrak{h}_2 \subseteq \mathfrak{h}_1$ or $[\mathfrak{h}_1,\mathfrak{h}_2] = \{0\}$.
		
	\end{proposition}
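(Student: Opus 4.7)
The plan is to compute the Kostant--Kirillov--Souriau Poisson bracket $\{p_1^*h_1, p_2^*h_2\}(\xi)$ directly at a point $\xi \in \mathfrak{g}^*$. Since $p_i$ is linear, the chain rule gives $d(p_i^*h_i)_\xi = (dh_i)_{p_i\xi}$, viewed as an element $X_i \in \mathfrak{h}_i \subseteq \mathfrak{g}$ via the inclusion dual to $p_i$. The KKS formula from Section~\ref{s:basic definitions} then reduces the proposition to showing
$$\langle \xi, [X_1, X_2]\rangle = 0,$$
and the hypothesis $[\mathfrak{h}_1, \mathfrak{h}_2] \subseteq \mathfrak{h}_1$ immediately yields $[X_1, X_2] \in \mathfrak{h}_1$.

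To exploit the $H_1$-invariance of $h_1$, I would invoke the $\mathrm{Ad}$-invariant inner product $(-,-)$ on $\mathfrak{g}$ introduced in Section~\ref{s:properties of the sweeping map} and use it to identify $\mathfrak{g}^* \cong \mathfrak{g}$ and $\mathfrak{h}_1^* \cong \mathfrak{h}_1$. Under this identification $p_1$ becomes the orthogonal projection $\pi_1 \colon \mathfrak{g} \to \mathfrak{h}_1$, and $H_1$-invariance of $h_1$ becomes $\mathrm{Ad}(H_1)$-invariance on $\mathfrak{h}_1$. Differentiating this invariance at $\pi_1\xi$ and using $\mathrm{ad}$-invariance of $(-,-)$ yields $([X_1, \pi_1\xi], Y) = 0$ for every $Y \in \mathfrak{h}_1$; since $[X_1, \pi_1\xi]$ lies in $\mathfrak{h}_1$ and is orthogonal to $\mathfrak{h}_1$, one obtains the commutation
$$[X_1, \pi_1\xi] = 0.$$

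To conclude, decompose $\xi = \pi_1\xi + \xi^\perp$ with $\xi^\perp \in \mathfrak{h}_1^\perp$. Under the identification $\mathfrak{g}^* \cong \mathfrak{g}$ the dual pairing $\langle \xi, [X_1, X_2]\rangle$ equals $(\xi, [X_1, X_2])$, which splits as
$$(\pi_1\xi, [X_1, X_2]) + (\xi^\perp, [X_1, X_2]).$$
The first summand equals $([\pi_1\xi, X_1], X_2) = 0$ by $\mathrm{ad}$-invariance and the commutation just established; the second vanishes by orthogonality since $[X_1, X_2] \in \mathfrak{h}_1$ while $\xi^\perp \in \mathfrak{h}_1^\perp$. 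The conceptual heart of the argument is the commutation $[X_1, \pi_1\xi] = 0$, which expresses the standard fact that the gradient of an invariant function lies in the centralizer of its base point; the compactness of $G$, through the bi-invariant inner product, is what lets one pass from the dual-space invariance identity to this commutation in $\mathfrak{g}$ itself. The two special cases in the statement collapse this argument: if $\mathfrak{h}_2 \subseteq \mathfrak{h}_1$ one applies the infinitesimal invariance identity directly with $Y = X_2$, and if $[\mathfrak{h}_1, \mathfrak{h}_2] = 0$ then $[X_1, X_2]$ already vanishes in $\mathfrak{g}$.
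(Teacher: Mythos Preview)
Your argument is correct. The paper itself does not supply a proof of this proposition; it cites \cite[Proposition~4.1]{thimm} and adds two remarks clarifying how the stated version differs from Thimm's original. Your computation is the standard direct verification: identify $d(p_i^*h_i)_\xi$ with $X_i\in\mathfrak{h}_i$, use $[\mathfrak{h}_1,\mathfrak{h}_2]\subseteq\mathfrak{h}_1$ to place $[X_1,X_2]$ in $\mathfrak{h}_1$, and then split $\xi$ orthogonally so that one piece vanishes by the centralizer identity $[X_1,\pi_1\xi]=0$ (from $H_1$-invariance of $h_1$) and the other by orthogonality. This is essentially Thimm's argument, and as the paper's Remark notes, it indeed never uses any invariance hypothesis on $h_2$.

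One small point worth flagging: your proof leans on the $\mathrm{Ad}$-invariant positive-definite inner product, hence on compactness of $G$. The paper's second Remark observes that compactness is not actually needed---Thimm's original hypothesis is only that the subalgebras be nondegenerate with respect to an invariant form. Since the proposition as stated here does assume $G$ compact, your use of the inner product is entirely legitimate; but if you wanted to match the generality alluded to in the Remark, you would replace the positive-definite form by a nondegenerate invariant one and verify that nondegeneracy on $\mathfrak{h}_1$ suffices for the orthogonal-decomposition step.
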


	Thus one can find additional Poisson commuting functions on $\mathfrak{g}^*$ by pulling back invariant functions from subalgebras that satisfy (\ref{condition}). This construction of additional commuting functions is \emph{Thimm's trick}\footnote{There is a minor confusion in the literature between Thimm's trick and Guillemin and Sternberg's action coordinates.  Thimm's paper \cite{thimm} is solely concerned with constructing integrable systems whereas natural action coordinates for systems constructed by Thimm's trick originate in \cite{gs1}.}.
	
	\begin{remark} The statement of Proposition \ref{thimm} differs from the statement of \cite[Proposition 4.1]{thimm} in two ways. First, we note that it is only necessary for the function $h_1 \in C^{\infty}(\mathfrak{h}_1^*)$ to be $H_1$-invariant. This is implicit in the proof provided in \cite{thimm} which does not use the assumption that $h_2$ is $H_2$-invariant.  Second, it is not necessary to require $G$ to be compact (it is only assumed in \cite{thimm} that the subalgebras $\mathfrak{h}_1$ and $\mathfrak{h}_2$ are nondegenerate).
	\end{remark}
	
	\begin{remark}
		In the earlier paper \cite{trofimov} -- also concerned with the construction of collective completely integrable systems -- Trofimov also used chains of subalgebras $\mathfrak{h}_2 \subseteq \mathfrak{h}_1$ to find additional collective integrals. There are other constructions of additional commuting functions on $\mathfrak{g}^*$ that may give collective integrable systems such as the method of argument shifting \cite{manakov, mf}.
	\end{remark}
	
	If the coordinates of a completely integrable system $f_1, \ldots , f_n$ generate Hamiltonian $S^1$-actions, then the map $(f_1, \ldots , f_n)$ is said to provide \emph{action coordinates} for the integrable system.  In \cite[p. 119]{gs1}, Guillemin and Sternberg essentially applied Proposition \ref{pabiniak 2} to show that there are natural action coordinates for a collective completely integrable system on a Hamiltonian $G$-manifold $(M,\omega,\Phi)$ that has been constructed by Thimm's trick from a chain of subalgebras 
	$$ \mathfrak{h}_1 \subseteq \cdots \subseteq \mathfrak{h}_d$$
	(although they make the extra assumption that the principal stratum corresponding to $M$ is $(\mathfrak{t}_+^*)^{\text{int}}$).
	In general, systems of commuting collective functions constructed by Thimm's trick may not give a completely integrable system on $M$ (i.e. there are fewer than $n$ independent functions) and the principal stratum corresponding to $M$ may not be maximal. Nevertheless, it is still possible to construct a continuous map that generates a Hamiltonian torus action on an open dense subset as we will now see.
		
\begin{proposition}\label{thimm 1} Let $(M,\omega,\Phi)$ be a connected Hamiltonian $G$-manifold and suppose that  $\mathfrak{h}_1$, $\mathfrak{h}_2$ are subalgebras of $\mathfrak{g}$ with corresponding connected subgroups $H_1$ and $H_2$, such that 
$$ [  \mathfrak{h}_1, \mathfrak{h}_2] \subseteq \mathfrak{h}_1.$$
Let $\sigma_1$ be the principal stratum of a Weyl chamber $\mathfrak{t}_{1,+}^*\subseteq \mathfrak{h}_1^*$ corresponding to the induced action of $H_1$ on $M$. The action of $H_2$ on $M$ leaves $(p_1\circ \Phi)^{-1}(\Sigma_{\sigma_1})$ invariant and commutes with the new action of $T_{\sigma_1}$ defined there.
\end{proposition}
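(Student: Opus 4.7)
The plan is in two parts: first establish $H_2$-invariance of the set $V := (p_1 \circ \Phi)^{-1}(\Sigma_{\sigma_1})$ via a stabilizer-dimension argument, and then deduce commutation with the twisted $T_{\sigma_1}$-action from the fact that conjugation by $H_2$ on the compact connected group $H_1$ is inner.

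First I would unpack the hypothesis $[\mathfrak{h}_1,\mathfrak{h}_2]\subseteq \mathfrak{h}_1$. Since $H_1$ and $H_2$ are connected, this implies $H_2$ normalizes $H_1$, so $Ad_h$ preserves $\mathfrak{h}_1$ for every $h\in H_2$; dually, the $H_2$-action on $\mathfrak{g}^*$ preserves the annihilator $\mathfrak{h}_1^{\perp}$ and descends to a linear action $\rho\colon H_2\to GL(\mathfrak{h}_1^*)$. A short calculation from $G$-equivariance of $\Phi$ gives $\Phi_1(h\cdot m)=\rho(h)\Phi_1(m)$, where $\Phi_1:=p_1\circ \Phi$. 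A direct check shows $(H_1)_{\rho(h)\xi}=h\,(H_1)_\xi\,h^{-1}$, so $\rho$ preserves the dimension of the $H_1$-stabilizer. Applying Theorem \ref{principal stratum} to the induced $H_1$-action on $M$ yields $\Phi_1(M)\subseteq\overline{\Sigma_{\sigma_1}}$, and in this closure the principal stratum $\Sigma_{\sigma_1}$ is the unique stratum of minimal stabilizer dimension (since proper faces of $\sigma_1$ carry strictly larger stabilizers). Hence if $m\in V$ then $\Phi_1(h\cdot m)=\rho(h)\Phi_1(m)$ cannot drop into a lower stratum, so $h\cdot m\in V$.

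For commutation I would use connectedness of $H_2$: the conjugation map $H_2\to \mathrm{Aut}(H_1)$ lands in the identity component, which is $\mathrm{Inn}(H_1)$ since $H_1$ is compact and connected. Thus for every $h\in H_2$ there exists $h_1\in H_1$ with $Ad_h|_{\mathfrak{h}_1}=Ad_{h_1}|_{\mathfrak{h}_1}$, i.e.\ $z:=h_1^{-1}h$ centralizes $H_1$ in $G$. Given $m\in V$ and $g\in H_1$ with $Ad_g^*\Phi_1(m)\in\sigma_1$, the element $g':=gh_1^{-1}\in H_1$ satisfies $Ad_{g'}^*\Phi_1(h\cdot m)\in\sigma_1$, so one computes
\[
 t\ast(h\cdot m)= h_1 g^{-1}tg\, h_1^{-1}h\cdot m = h_1 g^{-1}tg\, z\cdot m = h_1 z\, g^{-1}tg\cdot m = h\cdot(t\ast m),
\]
where the middle equality uses that $z$ commutes with $g^{-1}tg\in H_1$. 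The hardest part is the stratum-dimension argument in the second paragraph; once one observes that only the dimension of the stabilizer enters and that it strictly increases on lower strata, the proposition reduces to the algebraic fact that $\mathrm{Aut}^0(H_1)=\mathrm{Inn}(H_1)$ for compact connected $H_1$.
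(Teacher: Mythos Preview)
Your argument is correct and is genuinely different from the paper's. The paper proceeds analytically: for each $X_1\in\mathfrak t_{\sigma_1}$ it uses the Whitney extension theorem to produce $H_1$-invariant smooth functions $F_\varepsilon\in C^\infty(\mathfrak h_1^*)$ that agree with $\langle s,X_1\rangle$ on closed pieces $H_1\cdot(\sigma_1)_\varepsilon$, applies Thimm's trick (Proposition~\ref{thimm}) to get $\{\langle p_2\circ\Phi,X_2\rangle, F_\varepsilon\circ p_1\circ\Phi\}=0$, and passes to the limit over $\varepsilon$ to conclude that the Hamiltonian flows of $\mathfrak h_2$ preserve $s\circ p_1\circ\Phi$ on $\mathcal U$; invariance of $\mathcal U$ and commutation with the $T_{\sigma_1}$-action are then read off from this Poisson identity. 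Your proof bypasses both the extension step and the Poisson calculation: you obtain invariance of $(p_1\circ\Phi)^{-1}(\Sigma_{\sigma_1})$ from the purely group-theoretic observation that $\rho(h)$ preserves $H_1$-stabilizer dimension and that $\Sigma_{\sigma_1}$ is the minimal-stabilizer stratum inside $\overline{\Sigma_{\sigma_1}}\supseteq\Phi_1(M)$, and you obtain commutation from $\mathrm{Aut}^0(H_1)=\mathrm{Inn}(H_1)$ for compact connected $H_1$, which lets you write $h=h_1 z$ with $z$ centralizing $H_1$. This is more elementary (no Whitney extension) and makes the mechanism transparent. What the paper's route buys is that it literally proves the vanishing of the Poisson bracket $\{\langle p_2\circ\Phi,X_2\rangle,\langle s\circ p_1\circ\Phi,X_1\rangle\}$ on $\mathcal U$, which is the form used downstream (e.g.\ in Corollary~\ref{thimm torus}) and ties the result directly to Proposition~\ref{thimm}; your approach yields the same conclusion at the level of actions but would need one extra line to recover the Poisson statement if it is needed elsewhere.
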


\begin{proof}
	Let $\sigma_1$ be the principal stratum of $\mathfrak{t}_{1,+}^*$ corresponding to $M$. Let $\Sigma_{\sigma_1} = H_1 \cdot \sigma_1$ and let $\mathcal{U} = (p_1\circ \Phi)^{-1}(\Sigma_{\sigma_1})$.  Identify $t_{\sigma_1}^*$ with the span of $\sigma_1$ in $\mathfrak{h}_1^*$ and fix $X_1 \in \mathfrak{t}_{\sigma_1}$.
	
	 For $\varepsilon >0 $, let 
	 $$(\sigma_1)_{\varepsilon} = \sigma_1 \setminus \bigcup_{\tau < \sigma_1} B_{\varepsilon}(\tau)$$
	 where $\tau < \sigma_1$ are the strata of $\mathfrak{t}_{1,+}^*$ contained in $\overline{\sigma_1}\setminus \sigma_1$ and  
	 $$ B_{\varepsilon}(\tau) = \left\{ \xi \in \mathfrak{t}_{\sigma_1}^* \colon d(\xi,\tau) < \varepsilon \right\}$$
	 is the $\varepsilon$-neighbourhood of $\tau$ in the subspace $\mathfrak{t}_{\sigma_1}^*$ (with respect to the metric induced by the inner product).
		 
	Since $s\colon \Sigma_{\sigma_1} \rightarrow \sigma$ is smooth, the function $\langle s,X_1\rangle\colon \Sigma_{\sigma_1} \rightarrow \mathbb{R}$ is smooth. Since $\Sigma_{\sigma_1}$ is submanifold of $\mathfrak{h}_1^*$, the restriction of $\langle s,X_1\rangle$ to the closed set $H_1 \cdot (\sigma_1)_{\varepsilon} \subseteq \Sigma_{\sigma_1}$ is smooth as a function on a closed subset of $\mathfrak{h}_1^*$.  By the Whitney Extension Theorem, there exists a smooth function $F_{\varepsilon} \in C^{\infty}(\mathfrak{h}_1^*)$ such that 
	$$F_{\varepsilon}\vert_{H_1 \cdot (\sigma_1)_{\varepsilon}} = \langle s,X_1\rangle.$$
	Since $H_1$ is compact and $\langle s,X_1\rangle$ is $H_1$-invariant, we can take $F_{\varepsilon}$ to be $H_1$-invariant.
	
	 Since  $F_{\varepsilon} \in C^{\infty}(\mathfrak{h}_1^*)^{H_1}$ and $[\mathfrak{h}_1, \mathfrak{h}_2] \subseteq \mathfrak{h}_1$, we may apply Proposition \ref{thimm} to conclude that for all $X_2 \in \mathfrak{h}_2$,
	 $$ \left\{ \langle p_2\circ \Phi,X_2\rangle, F_{\varepsilon} \circ p_1 \circ \Phi \right\}_M = \left\{ \langle p_2,X_2\rangle, F_{\varepsilon} \circ p_1 \right\}_{\mathfrak{g}^*} = 0.$$
	 Let $\mathcal{U}_{\varepsilon} = (p_1\circ \Phi)^{-1}\left(H_1\cdot (\sigma_1)_{\varepsilon}^{\text{rel-int}}\right)$. The sets $\mathcal{U}_{\varepsilon} $ are open in $M$ and 
	 $$ \mathcal{U} = \bigcup_{\varepsilon>0} \mathcal{U}_{\varepsilon}.$$	
	Thus we have shown that for all $\varepsilon>0$, 
	$$\left\{ \langle p_2 \circ \Phi,X_2\rangle,\langle s \circ p_1 \circ \Phi, X_1\rangle\right\}\vert_{\mathcal{U}_{\varepsilon}} = \left\{ \langle p_2 \circ \Phi,X_2\rangle,F_{\varepsilon}\circ p_1 \circ \Phi\right\}\vert_{\mathcal{U}_{\varepsilon}} = 0.$$
	so
	$$\left\{ \langle p_2 \circ \Phi,X_2\rangle,\langle s \circ p_1 \circ \Phi, X_1\rangle\right\}_{\mathcal{U}} = 0.$$
	Since $H_2$ is connected and the action of $H_2$ is Hamiltonian, generated by $p_2\circ \Phi$, it follows  that the action of $H_2$ on $M$ leaves $(p_1\circ \Phi)^{-1}(\Sigma_{\sigma_1})$ invariant and commutes with the new action of $T_{\sigma_1}$ defined there.
\end{proof} 

Let $\sigma_2$ be the principal stratum of $\mathfrak{t}_{2,+}^*$ corresponding to the induced $H_2$-action on $M$, which is generated by $p_2 \circ\Phi$.  A new action of the torus $T_{\sigma_2}$ on $(p_2\circ \Phi)^{-1}(\Sigma_{\sigma_2})$ is defined as in equation \eqref{thimm action} via the action of elements of $H_2$. It follows by the preceding proposition that this action leaves $(p_1\circ \Phi)^{-1}(\Sigma_{\sigma_1})$ invariant and commutes with the new action of $T_{\sigma_1}$. Thus we have proven the following corollary.

\begin{corollary}\label{thimm torus}
	If $[\mathfrak{h}_1,\mathfrak{h}_2]\subseteq \mathfrak{h}_1 \text{ or } \mathfrak{h}_2$, then the new $T_{\sigma_1}$-action generated by $s\circ p_1\circ\Phi$ and the new $T_{\sigma_2}$-action generated by $s\circ p_2\circ\Phi$ commute on the open, dense subset where they are both defined, $$(p_1\circ\Phi)^{-1}(\Sigma_{\sigma_1})\cap(p_2\circ\Phi)^{-1}(\Sigma_{\sigma_2}).$$
\end{corollary}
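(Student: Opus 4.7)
The plan is to derive the corollary as a direct consequence of Proposition \ref{thimm 1}. The key observation is that while the new $T_{\sigma_2}$-action need not coincide with any single one-parameter subgroup of $H_2$ acting on $M$, equation \eqref{thimm action} exhibits it pointwise as an $H_2$-action: for each $m \in (p_2 \circ \Phi)^{-1}(\Sigma_{\sigma_2})$ and each $t \in T_{\sigma_2}$, one has $t \ast m = (g^{-1}tg) \cdot m$ for some $g \in H_2$ depending on $m$. Hence any statement about the full $H_2$-action commuting with another action on $M$ should transfer to the new $T_{\sigma_2}$-action, modulo some care about the point-dependence of $g$.

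First, I would reduce to the case $[\mathfrak{h}_1, \mathfrak{h}_2] \subseteq \mathfrak{h}_1$; the case $[\mathfrak{h}_1, \mathfrak{h}_2] \subseteq \mathfrak{h}_2$ follows by interchanging the roles of the two subalgebras. Setting $\mathcal{U}_i = (p_i \circ \Phi)^{-1}(\Sigma_{\sigma_i})$, Proposition \ref{thimm 1} immediately yields that the $H_2$-action preserves $\mathcal{U}_1$ and commutes there with the new $T_{\sigma_1}$-action; since the latter is generated by the momentum map $s \circ p_1 \circ \Phi$ and $H_2$ is connected, this is equivalent to the $H_2$-invariance of $\langle s \circ p_1 \circ \Phi, X_1\rangle$ on $\mathcal{U}_1$ for every $X_1 \in \mathfrak{t}_{\sigma_1}$.

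To finish, I would work infinitesimally rather than group-theoretically. By Proposition \ref{pabiniak 2}, the Hamiltonian vector field of $\langle s \circ p_2 \circ \Phi, X_2'\rangle$ at a point $m \in \mathcal{U}_2$ is the fundamental vector field of $X_2'$ under the new $T_{\sigma_2}$-action, which by \eqref{thimm action} equals the fundamental vector field of $Ad_{g^{-1}}(X_2') \in \mathfrak{h}_2$ under the $H_2$-action at $m$. On $\mathcal{U}_1 \cap \mathcal{U}_2$ the $H_2$-invariance of $\langle s \circ p_1 \circ \Phi, X_1\rangle$ then makes this vector field annihilate $\langle s \circ p_1 \circ \Phi, X_1\rangle$, so the Poisson bracket $\{\langle s \circ p_1 \circ \Phi, X_1\rangle, \langle s \circ p_2 \circ \Phi, X_2'\rangle\}$ vanishes on $\mathcal{U}_1 \cap \mathcal{U}_2$ and the two torus actions commute on their common domain.

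The main obstacle is precisely the point-dependence noted at the outset: a direct attempt to verify $t_1 \ast (t_2 \ast m) = t_2 \ast (t_1 \ast m)$ at the group level would require tracking how the choice of $g \in H_2$ changes along the $T_{\sigma_1}$-flow, which is awkward because Proposition \ref{thimm 1} with hypothesis $[\mathfrak{h}_1,\mathfrak{h}_2]\subseteq \mathfrak{h}_1$ controls only the $H_2$-side, not the $H_1$-side. Passing through Poisson brackets and Hamiltonian vector fields sidesteps this issue by reducing everything to the infinitesimal statement already built into Proposition \ref{thimm 1}.
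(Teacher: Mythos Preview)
Your proof is correct and takes essentially the same approach as the paper: both derive the corollary directly from Proposition~\ref{thimm 1} via the observation that the new $T_{\sigma_2}$-action is, pointwise, given by elements of $H_2$ acting on $M$. The paper argues tersely at the group level (simply noting that $t_2 \ast m$ is an $H_2$-element applied to $m$, and $H_2$ commutes with the $T_{\sigma_1}$-action by Proposition~\ref{thimm 1}), while you pass to the infinitesimal level with Poisson brackets to handle the point-dependence of $g$ more carefully; the underlying idea is the same.
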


%%%%%%%%%%%%%%%%%%%%
%  MAIN THEOREM

Let $\mathfrak{h}_1, \ldots, \mathfrak{h}_d$ be subalgebras of $\mathfrak{g}$ such that for all $1 \leq i,j \leq d$,
\begin{equation}\label{condition of thimm}
	[\mathfrak{h}_i,\mathfrak{h}_j] \subseteq \mathfrak{h}_i \text{ or } \mathfrak{h}_j
\end{equation}
and let $(M,\omega,\Phi)$ be a connected Hamiltonian $G$-manifold.  For each $1 \leq k \leq d$, fix maximal tori $T_k$ and Weyl chambers $\mathfrak{t}_{k,+}^* \subseteq \mathfrak{t}_k^*$.  The maps $p_k\circ \Phi$ are momentum maps for the induced actions of $H_k$ on $M$ and by Proposition \ref{principal stratum} there are principal strata $\sigma_k \subseteq \mathfrak{t}_{k,+}^*$ corresponding to each of the Hamiltonian $H_k$-actions on $M$.  For each $k$, let $U_k$ denote the connected open dense subset $\left( p_k \circ \Phi\right)^{-1}(\Sigma_{\sigma_k})$ where $\Sigma_{\sigma_k} = H_k \cdot \sigma_k$.  For each $k$, let $T_{\sigma_k}$ be the torus with Lie algebra $\mathfrak{t}_{\sigma_k}$.

Let $F$ be the composition of $\Phi$ with the map
\begin{equation}\label{continuous map}
	\left(s\circ p_1  ,  \ldots,  s\circ p_d \right)\colon \mathfrak{g}^*\rightarrow  \mathfrak{t}_1^* \oplus \cdots \oplus \mathfrak{t}_d^*.
\end{equation} 
We say that $F$ is \emph{constructed by Thimm's trick with Guillemin and Sternberg's action coordinates} from the subalgebras $\mathfrak{h}_1, \ldots , \mathfrak{h}_d$.  The map $F$ is continuous since the sweeping maps are continuous, but in general the map $F$ is not smooth.

By Corollary \ref{thimm torus}, the set
\begin{equation}\label{thimm: definition of U}
  \mathcal{U} = F^{-1}(\sigma_1 \times \cdots \times \sigma_d)   = \bigcap_{1\leq k \leq d} U_k
\end{equation} 
is invariant under the new actions of the tori $T_{\sigma_k}$, and the new actions of the tori $T_{\sigma_k}$ commute on $\mathcal{U}$.  Thus the new actions define an action of a big torus $T' = T_{\sigma_1} \times \cdots \times T_{\sigma_d}$
on $\mathcal{U}$ and this action is Hamiltonian with momentum map $F\vert_{\mathcal{U}} \colon \mathcal{U} \rightarrow \mathfrak{t}_1^* \oplus \cdots \oplus \mathfrak{t}_d^*$.  

\begin{lemma}\label{connected} The open dense set $\mathcal{U}$ is connected.
	
\end{lemma}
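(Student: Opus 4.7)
The plan is to show $\mathcal{U}$ is connected by combining Proposition~\ref{preimage of open face} with the standard fact that removing a locally finite union of codimension-$\geq 2$ submanifolds from a connected manifold leaves a connected set.

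First I would write
\[
  M \setminus \mathcal{U} \;=\; \bigcup_{k=1}^{d} (M \setminus U_k).
\]
By Proposition~\ref{preimage of open face} applied to the induced Hamiltonian $H_k$-action on $M$ (whose momentum map is $p_k \circ \Phi$), each set $M \setminus U_k$ is contained in a locally finite union of submanifolds of $M$ of codimension at least $2$. Taking the union over the finitely many indices $k = 1, \ldots, d$, the complement $M \setminus \mathcal{U}$ is also contained in a locally finite union of submanifolds of codimension at least $2$ in $M$.

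Next I would invoke the general principle that a connected smooth manifold with a locally finite union of codimension-$\geq 2$ submanifolds removed remains connected. The standard proof: take two points $p, q \in \mathcal{U}$; since $M$ is connected (hence path-connected, as a manifold), there is a smooth path $\gamma \colon [0,1] \to M$ from $p$ to $q$. By a transversality argument, this path can be perturbed (with endpoints fixed in $\mathcal{U}$, which is open) to a nearby path $\tilde\gamma$ which is transverse to each of the submanifolds $N_\alpha$ in the locally finite family containing $M \setminus \mathcal{U}$. Since $\dim N_\alpha + \dim [0,1] = \dim N_\alpha + 1 < \dim M$, transversality forces $\tilde\gamma([0,1]) \cap N_\alpha = \emptyset$, so $\tilde\gamma$ lies entirely in $\mathcal{U}$ and connects $p$ to $q$. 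The local finiteness of the family plus compactness of $[0,1]$ ensures that only finitely many $N_\alpha$ are relevant, making the transversality perturbation legitimate.

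The only subtle point is the transversality step, since the submanifolds in question are simply smooth submanifolds of $M$ (rather than having any special structure), but this is exactly the setting in which the classical transversality theorem applies; no further hypotheses are needed. Density of $\mathcal{U}$, which is also asserted in the discussion preceding the lemma, follows from the same codimension bound. Thus $\mathcal{U}$ is connected, completing the proof.
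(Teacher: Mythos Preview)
Your proof is correct and follows essentially the same approach as the paper: write $M\setminus\mathcal{U}$ as the finite union of the complements $M\setminus U_k$, invoke Proposition~\ref{preimage of open face} to bound each in a locally finite union of codimension-$\geq 2$ submanifolds, and conclude connectedness of $\mathcal{U}$. The paper simply asserts the last step as well known, whereas you spell out the transversality argument; otherwise the arguments are identical.
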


\begin{proof} By equation \eqref{thimm: definition of U}, $\mathcal{U}$ is the finite intersection of the sets $U_k\subseteq M$.  By Proposition \ref{preimage of open face}, the complement of $\mathcal{U}$ is a finite union of closed sets that are each contained in a locally finite union of submanifolds of codimension at least 2 in $M$, so $\mathcal{U}$ is connected. 
\end{proof}

Thus, given a connected Hamiltonian $G$-manifold $(M,\omega,\Phi)$ and subalgebras $\mathfrak{h}_1, \ldots, \mathfrak{h}_d$ that pairwise satisfy \eqref{condition of thimm}, 
 $$\left(\mathcal{U},\omega\vert_{\mathcal{U}},F\vert_{\mathcal{U}}\right)$$
is a connected Hamiltonian $T'$-manifold, where $T' = T_{\sigma_1}\times \cdots \times T_{\sigma_d}$.  

\begin{example}[The classical Gelfand-Zeitlin systems]\label{gz system}
 Let $(\mathcal{O},\omega)$ be a $U(n)$ coadjoint orbit  equipped with the Kostant-Kirillov-Souriau symplectic form. The induced action of a subgroup $U(n-1) \subseteq U(n)$ is Hamiltonian, and the restriction of the projection $p\colon \mathfrak{u}(n)^* \rightarrow \mathfrak{u}(n-1)^*$ to $\mathcal{O}$ is a momentum map for the induced action.  The classical Gelfand-Zeitlin system on $\mathcal{O}$ is the torus action constructed by Thimm's trick with Guillemin and Sternberg's action coordinates from a chain of subalgebras 
 $$ \mathfrak{u}(1) \subseteq \cdots \subseteq \mathfrak{u}(n-1).$$
Since $\mathcal{O}$ is compact, the continuous map 
 $$F \colon\mathcal{O} \rightarrow  \mathfrak{t}_1^* \oplus \cdots  \oplus \mathfrak{t}_{n-1}^*.$$
is proper.  By definition, $\mathcal{U} = F^{-1}\left( \sigma_1 \times \cdots \times \sigma_{n-1}\right)$, so the restriction $ F\vert_{\mathcal{U}}$ is proper as a map to the convex set 
 $$\sigma_1 \times \cdots \times \sigma_{n-1} \subseteq \mathfrak{t}_1^* \oplus \cdots \oplus \mathfrak{t}_{n-1}^*.$$
  Combining this with Lemma \ref{connected}, we have shown that $\left(\mathcal{U},\omega\vert_{\mathcal{U}},F\vert_{\mathcal{U}}\right)$ is a proper Hamiltonian $T'$-manifold.  Combining this with the fact that the $T'$ action on $\mathcal{U}$ is effective and completely integrable \cite{gs1,pabiniak}, we have shown that $\left(\mathcal{U},\omega\vert_{\mathcal{U}},F\vert_{\mathcal{U}}\right)$ is a proper toric $T'$-manifold.
 
 The construction of the classical Gelfand Zeitlin systems on $SO(n)$ coadjoint orbits is completely analogous, except that one considers a chain of subalgebras 
  $$ \mathfrak{so}(2) \subseteq \cdots \subseteq \mathfrak{so}(n-1)$$
 instead. 
\end{example}

More generally, if $(\mathcal{U},\omega\vert_{\mathcal{U}},F\vert_{\mathcal{U}})$ is a proper Hamiltonian $T'$-manifold, then it follows by the convexity theorem for proper Hamiltonian torus actions (cf. \cite[Theorem 30]{kb}), that 
\begin{enumerate}[(a)]
	\item $F(\mathcal{U})$ is convex,
	\item the fibres of $F\vert_{\mathcal{U}}$ are connected, and 
	\item the domain and codomain restricted map $F\vert_{\mathcal{U}} \colon \mathcal{U} \rightarrow F(\mathcal{U})$ is open (with respect to the subspace topologies on $\mathcal{U}$ and $F(\mathcal{U})$).
\end{enumerate}

For instance, if $F\colon M \rightarrow \mathfrak{t}_1^* \oplus \cdots \oplus \mathfrak{t}_d^*$ is proper (e.g. if $M$ is compact), then $F\vert_{\mathcal{U}}$ is proper as a map to the convex set $\sigma_1 \times \cdots \times \sigma_n$.  It is natural to wonder whether properties (a), (b) and (c) are also true for the map $F\colon M \rightarrow \mathfrak{t}_1^* \oplus \cdots \oplus \mathfrak{t}_d^*$ if $F$ is proper. This prompts the following lemma.

\begin{lemma}\label{marginal connectedness} Let $X$ be a Hausdorff topological space and let $f\colon X \rightarrow \mathbb{R}^n$ be a continuous proper map. Suppose $S$ is a dense subset of $X$ saturated\footnote{A subset $S\subseteq X$ is saturated by a map $f\colon X\rightarrow Y$ if it is a union of level sets of $f$.} by $f$ such that  $f(S)$ is convex, the fibres of $f|_S$ are connected, and $f|_S\colon S \rightarrow f(S)$ is open. Then the fibres of $f$ are  connected.
\end{lemma}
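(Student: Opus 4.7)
The plan is to argue by contradiction and import the hypothesised good behaviour on $S$ via density. Suppose some fibre $f^{-1}(y_0)$ is disconnected, so $f^{-1}(y_0) = A \sqcup B$ for disjoint nonempty closed subsets $A, B$. Since $f$ is proper and $X$ is Hausdorff, the fibre $f^{-1}(y_0)$ is compact, so $A$ and $B$ are compact. Using the Hausdorff property, separate them by disjoint open subsets $U, V \subseteq X$ with $A \subseteq U$ and $B \subseteq V$.

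The first key step is the standard properness argument: for all sufficiently small $\varepsilon > 0$, the preimage $f^{-1}(B_\varepsilon(y_0))$ is contained in $U \cup V$. Otherwise one could extract a sequence $x_n \notin U \cup V$ with $f(x_n) \to y_0$; properness of $f$ then produces a subsequence converging to some $x \in f^{-1}(y_0) \subseteq U \cup V$, a contradiction. Fix such an $\varepsilon$ and write $W := f^{-1}(B_\varepsilon(y_0))$, so $W = (W \cap U) \sqcup (W \cap V)$ as a topological space.

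Next I would intersect with $S$. Set $P := W \cap U \cap S$ and $Q := W \cap V \cap S$. These are disjoint open subsets of $S$ whose union is $W \cap S$, and both are nonempty: the open set $W \cap U$ contains $A \neq \emptyset$, and $S$ is dense in $X$, so $P \neq \emptyset$; similarly $Q \neq \emptyset$. I claim $P$ and $Q$ are saturated by $f|_S$. Indeed, for $p \in P$ the fibre $f|_S^{-1}(f(p))$ is connected by hypothesis and contained in $W \subseteq U \cup V$ (since $f(p) \in B_\varepsilon(y_0)$); as $U \cap V = \emptyset$ and $p \in U$, the whole fibre lies in $U$, hence in $P$. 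The same argument works for $Q$.

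Now I would use the openness hypothesis: since $f|_S\colon S \to f(S)$ is open, the $f|_S$-saturated open sets $P$ and $Q$ have open images $f|_S(P)$ and $f|_S(Q)$ in $f(S)$. These images are disjoint (a common value $y$ would yield a fibre $f|_S^{-1}(y)$ meeting both $U$ and $V$, contradicting connectedness of the fibre), nonempty, and together cover $B_\varepsilon(y_0) \cap f(S)$. This last set is the intersection of two convex subsets of $\mathbb{R}^n$, hence convex, and it is nonempty because $y_0 \in f(X) \subseteq \overline{f(S)}$ by density of $S$ and continuity of $f$. A nonempty convex set is connected, contradicting the decomposition $B_\varepsilon(y_0) \cap f(S) = f|_S(P) \sqcup f|_S(Q)$. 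Hence every fibre of $f$ is connected.

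The main delicate point is the last paragraph: the argument must cover the case $y_0 \notin f(S)$ (only $y_0 \in \overline{f(S)}$ is guaranteed), and what saves us is that convexity of $f(S)$ and of the ball $B_\varepsilon(y_0)$ forces their intersection to be convex regardless of whether $y_0$ lies in $f(S)$.
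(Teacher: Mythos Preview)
Your proof is correct and follows the same overall strategy as the paper's: use properness to trap $f^{-1}(B_\varepsilon(y_0))$ inside $U\cup V$, then use openness of $f|_S$ together with convexity of $B_\varepsilon(y_0)\cap f(S)$ to derive a contradiction. The execution differs in one useful way. You begin by separating the two compact pieces $A,B$ of the fibre by \emph{disjoint} open sets $U,V$ (a standard consequence of Hausdorffness plus compactness). With $U\cap V=\emptyset$ in hand, the saturated sets $P,Q$ have genuinely disjoint images under $f|_S$, so connectedness of the convex set $B_\varepsilon(y_0)\cap f(S)$ gives an immediate contradiction. The paper instead takes arbitrary open extensions $U',V'$ of an open cover of the fibre (not assumed disjoint), so the images $f(U'\cap S)$ and $f(V'\cap S)$ may overlap; the paper then uses that overlap, picks a sequence of values $x_n\to y_0$ in the overlap, finds points $y_n\in U'\cap V'\cap f^{-1}(x_n)$ using connectedness of the $S$-fibres, and passes to a limit via a second properness argument. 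Your disjointness assumption eliminates this second limiting step entirely, making the argument shorter.

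One minor technical remark (shared with the paper's own proof): the sentence ``properness of $f$ then produces a subsequence converging to some $x$'' tacitly uses sequential compactness, which is not guaranteed in an arbitrary Hausdorff space. This is easily repaired without sequences: the sets $f^{-1}(\overline{B_{1/n}(y_0)})\setminus(U\cup V)$ are closed, nested, and contained in the compact set $f^{-1}(\overline{B_1(y_0)})$, so if all were nonempty their intersection would be nonempty, yielding a point of $f^{-1}(y_0)\setminus(U\cup V)$.
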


\begin{proof}
Since the fibres of $f|_S$ are connected and $S$ is saturated by $f$, it remains to show that the fibres $f^{-1}(x)$, $x \in f(X\setminus S)$ are connected.

Fix $x \in f(X\setminus S)$ and let $F = f^{-1}(x)$. Suppose that $F = U \cup V$ for open sets $U,V \subseteq F$. Since $U$ and $V$ are open with respect to the subspace topology on $F$, there are open sets $U',V' \subseteq X$ with $U = U' \cap F$, $V = V'\cap F$.  Our aim is to show that $U\cap V \neq \emptyset$.

First, we claim that there is a $\delta > 0$ such that $f^{-1}(B_{\delta}(x)) \subseteq U' \cup V'$, where $B_{\delta}(x)$ is the open ball of radius $\delta$ centred at $x$. If not then, for all $n \geq 1$, there is a $y_n \in f^{-1}(B_{1/n}(x))$ such that $y_n \not \in U' \cup V'$. Since $f$ is proper, $f^{-1}(\overline{B}_{1}(x))$ is compact in $X$. Thus there is a convergent subsequence $y_{n_k} \rightarrow y$, and $y\in F$ by continuity. Since $y_n$ is contained in the closed set $X\setminus(U'\cup V')$, so is the limit $y$, which contradicts our assumption that $F = U \cup V$.

For every $\gamma>0$, the set $B_{\gamma}(x) \cap f(S)$ is convex and non-empty. Further,
$$ A_{\gamma} =B_{\gamma}(x)\cap f(U'\cap S) \mbox{ and } C_{\gamma} =B_{\gamma}(x)\cap f(V'\cap S)$$
are non-empty, open subsets of $f(S)$, since $f|_S$ is open as a map to $f(S)$ and $S$ is dense. By the previous paragraph, $B_{\gamma}(x) \cap f(S) \subseteq A_{\gamma} \cup C_{\gamma}$ for all positive $\gamma<\delta$. Thus since $B_{\gamma}(x) \cap f(S)$ is connected and non-empty, $A_{\gamma} \cap C_{\gamma} \neq \emptyset$ for all positive $\gamma< \delta$.

Fix $1/N < \delta$. For every $n \geq N$, we have shown there is an element $x_n \in A_{1/n}\cap C_{1/n}$. Thus the intersections $U' \cap f^{-1}(x_n)$ and $V' \cap f^{-1}(x_n)$ are both non-empty. Since $f^{-1}(x_n) \subseteq U' \cup V'$. and $f^{-1}(x_n)$ is connected, there exists a $y_n \in U' \cap V' \cap f^{-1}(x_n)$.  Once again, by properness of $f$, we can find a convergent subsequence $y_{n_k}$ with limit $y\in F$.  

It follows that $U \cap V \neq \emptyset.$ Suppose $y\not\in V$ (and thus, $y\in U$).  Since $y_n \in V'$, $y \in \overline{V'}$. Observe that $\overline{V'} \cap F = \overline{V' \cap F}  =\overline{V} $ (where the second and third closures are taken in the subspace topology) so $y \in \overline{V}$.  This contradicts $U \cap V = \emptyset$ since $U$ is an open neighbourhood of $y$ in $F$. 
\end{proof}
%
%\begin{remark}
%	Lemma \ref{marginal connectedness} can be used to prove that the fibres of integrable systems constructed from toric degenerations in \cite{harada-kaveh} are connected. 
%\end{remark}

%
\begin{remark}
	Note that Lemma \ref{marginal connectedness} is false if various assumptions are dropped.  For example,
	\begin{itemize}
		\item Let $X$ be the sphere of radius 1 in $\mathbb{R}^3$ and let $f\colon X \rightarrow \mathbb{C}$ be the map $f(x,y,z) = e^{\pi i (z+1)}$. Then
		\begin{itemize}
			\item $f$ is a proper continuous map.
			\item The set $S = X \setminus \left\{ (0,0,1),(0,0,-1)\right\}$ is dense in $X$ and saturated by $f$.
			\item The restriction $f\vert_S\colon S \rightarrow f(S)$ is open.
			\item The fibres of $f\vert_S$ are connected
		\end{itemize}
		But $f(S)$ not convex and the fibre $f^{-1}(1)$ is not connected.
		\item Let $X$ be the unit square $[0,1]\times [0,1]$ minus the set $\{1\} \times (0,1)$. Let $f\colon X \rightarrow \mathbb{R}$ be projection to the first coordinate. Then 
		\begin{itemize}
			\item $f$ is a continuous map.
			\item The set $S = [0,1) \times [0,1]$ is dense in $X$ and saturated by $f$.
			\item The restriction $f\vert_S\colon S \rightarrow f(S)$ is open.
			\item The fibres of $f\vert_S$ are connected.
		\end{itemize}
		But $f$ is not proper and the fibre $f^{-1}(1)$ is not connected.
	\end{itemize}
\end{remark}

We can now prove Theorem \ref{main theorem}.

\begin{proof}[Proof of Theorem 1:] (1) By Lemma \ref{connected}, $\mathcal{U}$ is connected. Since $F$ is proper, $F\vert_{\mathcal{U}}$ is proper as a map to the convex set $\sigma_1 \times \cdots \times \sigma_d$. Therefore $(\mathcal{U}, \omega\vert_{\mathcal{U}},F\vert_{\mathcal{U}})$ is a proper Hamiltonian $T'$-manifold.

\vspace{.5em}

\noindent (2) By (1) and the convexity theorem for proper Hamiltonian torus actions (cf. \cite[Theorem 30]{kb}), $F(\mathcal{U})$ is convex. Since $\mathcal{U}$ is dense in $M$ and $F$ is a proper continuous map, $F(M) = F(\overline{\mathcal{U}}) = \overline{F(\mathcal{U})}$. Thus $F(M)$ is convex.

\vspace{.5em}

\noindent (3) We check the hypotheses of Lemma \ref{marginal connectedness}. $F$ is continuous and by assumption, $F$ is proper. The set $\mathcal{U}$ is dense in $M$ and by definition it is saturated by $F$. By (1) and the convexity theorem for proper Hamiltonian torus actions (cf. \cite[Theorem 30]{kb}), the restriction $F\vert_{\mathcal{U}}$ satisfies conditions (a), (b), and (c). These are precisely the remaining hypotheses of Lemma \ref{marginal connectedness}, so the result follows. 
\end{proof}
 
\begin{remark} The non-abelian convexity theorem for Hamiltonian group actions says that if $(M,\omega,\Phi)$ is a Hamiltonian $G$-manifold with $\Phi$ proper, then the \emph{momentum set}
\begin{equation*}
	\square(M) := s\circ \Phi(M) = \Phi(M) \cap \mathfrak{t}_+^*	
\end{equation*}
is a convex, locally polyhedral set, the fibres of $s \circ \Phi$ are connected, and the map $s\circ \Phi$ is open as a map to its image\footnote{The first fact is standard in the literature (see e.g. \cite[Theorem 1.1]{lmtw}). The second fact is equivalent to fibre connectedness for the map $\Phi$ (see the argument in \cite[p. 256]{lmtw}). Openness of $s\circ\Phi$ as a map to its image is described in \cite[Theorem 1.2]{lmtw}).}.  Thus if $F$ is a map constructed by Thimm's trick with Guillemin and Sternberg's action coordinates from one subalgebra $\mathfrak{h}$, Theorem \ref{main theorem} reduces to the non-abelian convexity theorem.  Conversely, Theorem \ref{main theorem} does not follow directly from the non-abelian convexity theorem for Hamiltonian group actions; a product of maps with convex images and connected fibres does not necessarily have a convex image and connected fibres.
\end{remark}

\begin{remark} We were unable to determine in the context of Theorem \ref{main theorem} whether $F$ is open as a map to its image (property (c) above).  It is known that $F$ is open as a map to its image in the special case when $F = s\circ \Phi$ by the non-abelian convexity theorem for Hamiltonian group actions \cite[Theorem 1.2]{lmtw}.	
\end{remark}

%%%%%%%%%%%%%%%%%%%%
\subsection{Symplectic Reduction and Thimm's Trick}\label{s:symplectic reduction}
%%%%%%%%%%%%%%%%%%%%

Suppose that $(M, \omega,\Phi)$ is a connected Hamiltonian $G$-manifold and $F$ is a map constructed by Thimm's trick with Guillemin and Sternberg's action coordinates from subalgebras $\mathfrak{h}_1, \ldots ,\mathfrak{h}_d$ of $\mathfrak{g}$ as in the preceding section. 

Let $K$ be a closed, connected subgroup of $G$. The induced action of $K$ on $M$ is Hamiltonian with momentum map $\Phi_K = p_K \circ \Phi$.  The \emph{reduction} of $M$ at a level $\mu \in \mathfrak{k}^*$ is the quotient space $M_{\mu} = \Phi_K^{-1}(K\cdot\mu)/K$, where $K\cdot\mu$ is the coadjoint orbit of $K$ through $\mu$.  If $\mu$ is a regular value of $\Phi_K$, then this quotient space is a smooth manifold or an orbifold with a Marsden-Weinstein symplectic structure. Proposition \ref{reduction 2} is purely topological and will hold in the general setting where $\mu$ may be a critical value. Let $\pi\colon \Phi_K^{-1}(K\cdot\mu)\rightarrow M_{\mu}$ be the quotient projection map.

\begin{lemma}\label{reduction 1}
  If $[\mathfrak{h}_k,\mathfrak{k}]\subseteq \mathfrak{h}_k$ for all $1 \leq k\leq d$, then $F$ induces a map $\widetilde F$ on $M_{\mu}$ and the restriction of $\widetilde F$ to $\pi(\mathcal{U}\cap \Phi_K^{-1}(K\cdot\mu))$ generates a Hamiltonian torus action.
\end{lemma}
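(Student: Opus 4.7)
The plan is to verify two things: (a) that $F$ is $K$-invariant on $M$ so that it descends through the quotient map $\pi$ to a continuous map $\widetilde F\colon M_\mu \to \mathfrak{t}_1^*\oplus\cdots\oplus\mathfrak{t}_d^*$, and (b) that on the open subset $\pi(\mathcal{U}\cap\Phi_K^{-1}(K\cdot\mu))\subseteq M_\mu$ the descended map is the momentum map of a torus action obtained by reducing the $T'$-action on $\mathcal{U}$.

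For (a), I would fix $k$ and apply Proposition \ref{thimm 1} with $\mathfrak{h}_1=\mathfrak{h}_k$ and $\mathfrak{h}_2=\mathfrak{k}$; the hypothesis $[\mathfrak{h}_k,\mathfrak{k}]\subseteq\mathfrak{h}_k$ is precisely the bracket condition that proposition demands. Inside the proof of that proposition it is shown that for every $X\in\mathfrak{t}_{\sigma_k}$ and every $Y\in\mathfrak{k}$,
\[
\bigl\{\langle s\circ p_k\circ\Phi,X\rangle,\,\langle\Phi,Y\rangle\bigr\}\equiv 0\quad\text{on }U_k.
\]
Since the Hamiltonian vector field of $\langle\Phi,Y\rangle$ is the fundamental vector field $\underline Y$ of the $K$-action and $K$ is connected, this vanishing gives that $s\circ p_k\circ\Phi$ is $K$-invariant on $U_k$. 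Because $U_k$ is open, dense, and (again by Proposition \ref{thimm 1}) $K$-invariant in $M$, and because $s\circ p_k\circ\Phi$ is continuous while $K$ acts continuously, a standard density-and-continuity argument (approximate $m\in M$ by $m_n\in U_k$ and use that $g\cdot m_n\in U_k$ for all $g\in K$) extends $K$-invariance to all of $M$. Performing this for each $k$ shows that $F$ is $K$-invariant on $M$, so it descends through $\pi$ to a continuous $\widetilde F$ on $M_\mu$.

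For (b), the construction preceding Lemma \ref{connected} gives a Hamiltonian $T'$-action on $\mathcal{U}$ with momentum map $F|_{\mathcal{U}}$, and by Proposition \ref{thimm 1} this $T'$-action commutes with the $K$-action on $\mathcal{U}$. The same Poisson-bracket vanishing cited above, read with the roles of the two functions exchanged, shows in addition that $\Phi_K$ is $T'$-invariant on $\mathcal{U}$; hence $T'$ preserves $\mathcal{U}\cap\Phi_K^{-1}(K\cdot\mu)$ and its action descends to $\pi(\mathcal{U}\cap\Phi_K^{-1}(K\cdot\mu))$. The standard principle that reduction by a Hamiltonian $K$-action leaves intact a commuting Hamiltonian $T'$-action (and converts its momentum map into the induced function on the reduced space) then yields a Hamiltonian $T'$-action on $\pi(\mathcal{U}\cap\Phi_K^{-1}(K\cdot\mu))$ whose momentum map is exactly the restriction of $\widetilde F$.

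The main obstacle I anticipate is in step (a): Proposition \ref{thimm 1} naturally gives $K$-invariance of $s\circ p_k\circ\Phi$ only on the open stratum $U_k$, but $\widetilde F$ must be defined on all of $M_\mu$, so $F$ must be $K$-invariant at points of $\Phi_K^{-1}(K\cdot\mu)$ that can lie in the complement of $U_k$. The combination of openness, density, and $K$-invariance of $U_k$, together with continuity of $F$, is exactly what is needed to propagate invariance everywhere. Once (a) is established, (b) is a routine application of commuting Hamiltonian reduction.
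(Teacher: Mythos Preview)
Your proposal is correct and follows essentially the same approach as the paper: invoke Proposition~\ref{thimm 1} (with $\mathfrak{h}_1=\mathfrak{h}_k$, $\mathfrak{h}_2=\mathfrak{k}$) to see that $K$ preserves $\mathcal{U}$ and commutes with the $T'$-action there, extend $K$-invariance of $F$ to all of $M$ by density of $\mathcal{U}$ and continuity, and then cite the standard fact (the paper points to Sjamaar--Lerman) that a commuting Hamiltonian action survives reduction with its momentum map descending accordingly. Your write-up is simply more explicit about the Poisson-bracket mechanism and the density argument than the paper's three-line proof.
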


Note that the set $\mathcal{U} \cap \Phi_K^{-1}(K\cdot\mu)$ could be empty.

\begin{proof}
  By Proposition \ref{thimm 1}, the actions of $K$ preserves $\mathcal{U}$ and commutes with the action of the big torus $T'$ defined there. The map $F$ is $K$-invariant on $M$ by continuity (since $\mathcal{U}$ is dense in $M$). Thus $F$ induces a map $\widetilde F$ on $M_{\mu}$ and  $\widetilde F$ is a momentum map for the induced $T'$-action on $\pi(\mathcal{U}\cap \Phi_K^{-1}(K\cdot \mu))$ if it is non-empty (see \cite[Example 1.11 and Lemma 3.2]{sl}). 
\end{proof}

\begin{proposition}\label{reduction 2}
  Suppose that either $F$ or $\Phi_K$ is proper and $[\mathfrak{h}_k,\mathfrak{k}] \subseteq \mathfrak{h}_k$ for all $1\leq k\leq d$. Then the induced map $\widetilde F\colon M_{\mu} \rightarrow \mathfrak{t}_1^* \oplus \cdots \oplus \mathfrak{t}_d^*$ has connected fibres and $\widetilde F(M_{\mu})$ is convex.
\end{proposition}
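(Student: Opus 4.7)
The plan is to reduce this proposition to Theorem \ref{main theorem} by enlarging the collection of subalgebras to include $\mathfrak{k}$ itself. Concretely, I would form the augmented map
\[
\bar{F} = (F,\, s_K\circ\Phi_K) \colon M \longrightarrow \mathfrak{t}_1^*\oplus\cdots\oplus\mathfrak{t}_d^*\oplus\mathfrak{t}_K^*,
\]
where $s_K\colon \mathfrak{k}^*\to\mathfrak{t}_{K,+}^*$ is the sweeping map for $K$. This is exactly the map produced by Thimm's trick with Guillemin and Sternberg's action coordinates from the enlarged collection $\mathfrak{h}_1,\ldots,\mathfrak{h}_d,\mathfrak{k}$. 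The original pairwise condition \eqref{condition of thimm} on the $\mathfrak{h}_j$'s together with the new hypothesis $[\mathfrak{h}_k,\mathfrak{k}]\subseteq \mathfrak{h}_k$ for all $k$ ensures that this enlarged collection still satisfies \eqref{condition of thimm}, so the whole machinery of Section \ref{s:thimms trick} applies to $\bar{F}$.

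Next I would verify that $\bar{F}$ is proper, which is where the alternative hypotheses enter. The sweeping map $s_K$ is proper since $|s_K(\xi)|=|\xi|$ with respect to the invariant inner product on $\mathfrak{k}^*$, so properness of $\Phi_K$ passes to $s_K\circ \Phi_K$. In either case of the hypothesis, for a compact $C$ in the codomain the preimage $\bar{F}^{-1}(C)$ is a closed subset of either $F^{-1}(\mathrm{pr}_1 C)$ or $(s_K\circ\Phi_K)^{-1}(\mathrm{pr}_2 C)$, one of which is compact; hence $\bar{F}^{-1}(C)$ is compact. Theorem \ref{main theorem} then yields that $\bar{F}(M)$ is convex and that every fibre of $\bar{F}$ is connected.

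The remaining step is a slicing argument at the level $s_K(\mu)$. Using $\Phi_K^{-1}(K\cdot\mu) = (s_K\circ\Phi_K)^{-1}(s_K(\mu))$, I would identify
\[
\widetilde{F}(M_\mu) \;=\; \{\,y : (y,\, s_K(\mu))\in \bar{F}(M)\,\},
\]
which is the intersection of the convex set $\bar{F}(M)$ with an affine subspace, and hence convex. Similarly, for each $x\in \widetilde{F}(M_\mu)$ one has $\pi^{-1}(\widetilde{F}^{-1}(x)) = \bar{F}^{-1}(x,\, s_K(\mu))$, and since the right-hand side is connected by Theorem \ref{main theorem}, its continuous image under $\pi$, which is $\widetilde{F}^{-1}(x)$, is connected as well. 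The conceptual heart is the realization that $[\mathfrak{h}_k,\mathfrak{k}]\subseteq \mathfrak{h}_k$ is exactly what lets one append $\mathfrak{k}$ to the Thimm chain; once this is noticed, the deep content (the use of Lemma \ref{marginal connectedness} inside Theorem \ref{main theorem}) has done all the work, and the passage to $M_\mu$ is a formal slice. I do not anticipate any real obstacles beyond this observation.
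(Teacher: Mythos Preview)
Your proposal is correct and follows essentially the same approach as the paper: augment $F$ by $s_K\circ\Phi_K$, observe that the enlarged collection $\mathfrak{h}_1,\ldots,\mathfrak{h}_d,\mathfrak{k}$ still satisfies \eqref{condition of thimm}, apply Theorem~\ref{main theorem} to the proper map $\bar{F}$, and then slice at level $s_K(\mu)$ to read off convexity and fibre connectedness for $\widetilde{F}$. Your slicing argument is in fact stated a bit more carefully than the paper's (you correctly use $s_K(\mu)$ and avoid an unnecessary intersection with the principal strata $\sigma_k$).
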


\begin{proof}
  By Corollary \ref{thimm torus}, the map
  \begin{equation}
    ( F,s\circ\Phi_K)\colon M \rightarrow \left(\mathfrak{t}_1^* \oplus \cdots \oplus \mathfrak{t}_d^*\right) \oplus \mathfrak{t}_{K}^*
  \end{equation}
  is a momentum map for the new action of $T' \times T_{\sigma_K}$ on 
  \begin{equation}
    \mathcal{U} \cap \left(s\circ\Phi_K\right)^{-1}(\sigma_K) =  ( F,s\circ\Phi_K)^{-1}\left(\sigma_1\times \cdots \times \sigma_d \times \sigma_K\right).
  \end{equation}
  Since one of $F$ or $ \Phi_K$ is proper, the continuous map $( F,s\circ\Phi_K)$ is proper. Thus by Theorem \ref{main theorem}, the image of $( F,s\circ\Phi_K)$ is convex and the fibres are connected.  The image $\widetilde F(M_{\mu})$ is the projection to $\mathfrak{t}_1^* \oplus \cdots \oplus \mathfrak{t}_d^* \subseteq \left(\mathfrak{t}_1^* \oplus \cdots \oplus \mathfrak{t}_d^*\right) \oplus \mathfrak{t}_{K}^*$ of the intersection
  \begin{equation}
    ( F,s\circ\Phi_K)(M) \cap \left(\sigma_1\times \cdots \times \sigma_d \times \left\{ \mu \right\}\right)
  \end{equation}
  which is convex, so $\widetilde F(M_{\mu})$ is convex.  The map $\widetilde F$ has connected fibres since $( F,s\circ\Phi_K)$ has connected fibres that are preserved by the action of $K$.  
\end{proof}

\section{Branching and Gelfand-Zeitlin systems}

\subsection{The branching cone}

We begin by recalling the branching cone, as described in \cite{baldoni-vergne}.

Let $G$ be a compact, connected Lie group. With respect to the trivialization of the cotangent bundle $T^*G$ by left-invariant vector fields the cotangent lifts of the left and right actions of $G$ on itself are given by 
	\begin{equation}
		\mathcal{L}_{g'}(g,\xi) = (g'g,\xi),\,\text{ and } \mathcal{R}_{g'}(g,\xi) = (g(g')^{-1},Ad_{g'}^*\xi).
	\end{equation}
	and these actions are Hamiltonian with respect to the canonical symplectic form on $T^*G$, generated by momentum maps
	\begin{equation}
		\Phi_{\mathcal{L}}(g,\xi) = -Ad_g^*\xi \text{ and } \Phi_{\mathcal{R}}(g,\xi) = \xi.
	\end{equation}
	Let $K\leq G$ be a closed, connected subgroup, and consider $T^*G$ as a Hamiltonian $K \times G$-manifold, where 
	\begin{equation}
		(k,g')\cdot (g,\xi) = \mathcal{R}_k\mathcal{L}_{g'}(g,\xi).
	\end{equation}
	This action is generated by the momentum map
	\begin{equation}
		(g,\xi) \mapsto \left( p_G^K\circ \Phi_{\mathcal{R}}(g,\xi),\Phi_{\mathcal{L}}(g,\xi)\right) = \left( p_G^K(\xi),-Ad_{g}^*\xi\right)
	\end{equation}
	where $p_G^K\colon \mathfrak{g}^* \rightarrow \mathfrak{k}^*$ is the projection dual to the inclusion $i\colon\mathfrak{k} \rightarrow \mathfrak{g}$. Following \cite{baldoni-vergne}, we consider the modified map 
	\begin{equation}
		\Psi\colon T^*G \rightarrow \mathfrak{k}^*\times \mathfrak{g}^* ,\, \Psi(g,\xi) = \left(p_G^K(\xi), Ad_{g}^*\xi\right).
	\end{equation}
	
	If $\mathfrak{t}_{G,+}^*$ and $\mathfrak{t}_{K,+}^*$ are positive Weyl chambers then 
	$$\mathfrak{t}_{K,+}^*\times \mathfrak{t}_{G,+}^* \subseteq  \mathfrak{k}^*\times \mathfrak{g}^*$$
	is a positive Weyl chamber for $K\times G$.  The map $\Psi$ is proper so by the non-abelian convexity theorem for Hamiltonian group actions \cite[Theorem 1.1]{lmtw}, the set 
	\begin{equation}\label{branching cone}
		\begin{split}
			C_{K,G} & = \Psi(T^*G) \cap \left(\mathfrak{t}_{K,+}^*\times \mathfrak{t}_{G,+}^* \right)\\
				& = \left\{ \left(\pi_G^K(\xi),Ad^*_g\xi\right) \in  \mathfrak{t}_{K,+}^*\times \mathfrak{t}_{G,+}^* \colon (g,\xi ) \in G\times \mathfrak{g}^* \right\}\\
				& = \left\{ \left(\eta,\xi\right) \in \mathfrak{t}_{K,+}^*\times \mathfrak{t}_{G,+}^* \colon \eta \in p_G^K(\mathcal{O}_{\xi}) \right\},
		\end{split}
	\end{equation}
	is a convex polyhedral cone.  This set is called the \emph{branching cone} for the subgroup $K \leq G$ \cite{baldoni-vergne}. Since it is a polyhedral cone, $C_{K,G}$ is defined as a subset of $\mathfrak{t}_{K}^* \times \mathfrak{t}_{G}^*$ by a finite list of inequalities, 
	\begin{equation}\label{branching inequalities}
		\langle a_i,\xi\rangle + \langle b_i, \eta\rangle \leq \kappa_i, \, i = 1, \ldots, N
	\end{equation}
	for $a_i\in \mathfrak{t}_G$, $b_i\in \mathfrak{t}_K$, and $\kappa_i \in \mathbb{R}$, which are called \emph{branching inequalities}.

	Note that if we fix $\xi\in \mathfrak{t}_{G,+}^*$, then the momentum set of the $G$ coadjoint orbit through $\xi$, as a Hamiltonian $K$-manifold, is the set 
	\begin{equation}
		p_G^K(\mathcal{O}_{\xi})\cap \mathfrak{t}_{K,+}^* = \left\{ \eta \in \mathfrak{t}_{K,+}^* \colon \, \eta \in p_G^K(\mathcal{O}_{\xi})\right\}
	\end{equation}
	which can be identified with the projection to $\mathfrak{t}_K^*$ of the intersection $\left(\mathfrak{t}_K^*\times \{\xi\} \right) \cap C_{K,G}.$ Accordingly, the momentum set of $\mathcal{O}_{\xi}$ is equal to the subset of $\mathfrak{t}_K^*$ defined by the inequalities (\ref{branching inequalities}) with $\xi$ fixed.
	
	\begin{remark}
		If $U(n)$ is embedded in $U(n+1)$ as a subgroup of block diagonal matrices $\text{diag}(A,1)$ then the inequalities defining the branching cone $C_{U(n),U(n+1)}$ can be described as the classical interlacing inequalities for eigenvalues of principal submatrices of Hermitian matrices, as observed in \cite{gs1}. Similar inequalities describe the branching cone for pairs $SO(n) \leq SO(n+1)$ \cite{pabiniak1}. Inequalities defining the branching cone of a general pair $K \leq G$ were described in \cite{berenstein-sjamaar}. 
	\end{remark}

\subsection{Branching and Thimm's trick}
	
	Suppose we have a connected Hamiltonian $G$-manifold $(M,\omega,\Phi)$ along with a chain of subalgebras $
		\mathfrak{h}_1 \subseteq \cdots \subseteq \mathfrak{h}_d= \mathfrak{g}$ 
	with corresponding connected subgroups $H_k$, maximal tori $T_k$, and choices of positive Weyl chambers $\mathfrak{t}_{k,+}^* \subseteq \mathfrak{t}_k^*$. For each $k$, let $\sigma_k \subseteq \mathfrak{t}_{k,+}^*$ be the principal stratum corresponding to the induced Hamiltonian action of $H_k$ on $M$ and let $T_{\sigma_k}$ be the corresponding subtorus of $T_k$. We consider the map $F$ constructed by Thimm's trick from this chain,
	\begin{equation}\label{gs map}
		F = \left( s\circ p_d^1\circ \Phi , \ldots, s\circ p_d^{d-1}\circ\Phi,s\circ\Phi\right) \colon M \longrightarrow \mathfrak{t}_1^* \oplus  \cdots \oplus \mathfrak{t}_{d}^*,
	\end{equation}
	which generates a Hamiltonian action of the torus $T' = T_{\sigma_1}\times \cdots \times T_{\sigma_d}$ on the open dense subset $\mathcal{U}$.  The projections $p_j^i\colon \mathfrak{h}_j^* \rightarrow \mathfrak{h}_i^*$ satisfy the identities  $p_j^i = p_{k}^i \circ  p_j^{k}$ for all $i<k<j$.  For each $1\leq  k < d$, let 
	\begin{equation}\label{inequalities 1}
		\langle a_{i,k+1},\xi\rangle + \langle b_{i,k} ,\,\eta\rangle \leq \kappa_{i,k} ,\, i = 1, \ldots ,N_k
	\end{equation}
	be the inequalities defining the branching cone $C_{H_{k},H_{k+1}}$ as a subset of $\mathfrak{t}_{k}^* \times \mathfrak{t}_{k+1}^*$, where $a_{i,k+1} \in \mathfrak{t}_{k+1}$, $b_{i,k} \in \mathfrak{t}_{k}$ and $\kappa_{i,k} \in \mathbb{R}$. If $\Phi$ is proper, then by the non-abelian convexity theorem for Hamiltonian group actions \cite[Theorem 1.1]{lmtw} the momentum set $\square \colon= \Phi(M) \cap \mathfrak{t}_{d,+}^*$ is a convex, locally polyhedral set defined by a list of inequalities
	\begin{equation}\label{inequalities 2}
		\langle \alpha_j, \xi \rangle \leq \upsilon_j,\, j \in S
	\end{equation}
	where $\alpha_j \in \mathfrak{t}_{d}$ and $\upsilon_j \in \mathbb{R}$, and $S$ is a set indexing the inequalities (note that without the assumption that $M$ is compact, $S$ may be infinite).
	
	\begin{proposition}\label{branching system}
		Let $(M,\omega,\Phi)$ be a connected Hamiltonian $G$-manifold with $\Phi$ proper and let $\mathfrak{h}_1 \subseteq \cdots \subseteq \mathfrak{h}_{d} = \mathfrak{g}$ be a chain of subalgebras.  The image of the map $F$ is the convex, locally polyhedral subset of $\mathfrak{t}_1^*\oplus \cdots \oplus \mathfrak{t}_d^*$ defined by the inequalities (\ref{inequalities 1}) and (\ref{inequalities 2}). Furthermore, the fibres of $F$ are connected.
	\end{proposition}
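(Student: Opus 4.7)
The plan is to first invoke Theorem \ref{main theorem} to obtain convexity of $F(M)$ and connectedness of fibres, then to identify $F(M)$ explicitly by induction on $d$. To apply Theorem \ref{main theorem} one must first check that $F$ is proper. Since $\mathfrak{h}_d = \mathfrak{g}$, the last component of $F$ is $s\circ\Phi$. The sweeping map $s$ is proper, because $s^{-1}(K) = G\cdot K$ is compact whenever $K$ is compact and $G$ is compact; combined with properness of $\Phi$ this shows $s\circ\Phi$ is proper, and since a continuous map with a proper component is itself proper, $F$ is proper. Theorem \ref{main theorem} then gives both convexity of $F(M)$ and connectedness of all fibres of $F$.

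It remains to identify $F(M)$ with the set cut out by (\ref{inequalities 1}) and (\ref{inequalities 2}). The inclusion $F(M)\subseteq\{\text{set defined by }(\ref{inequalities 1})\text{ and }(\ref{inequalities 2})\}$ is immediate: for any $m\in M$ and $1\le k<d$, the pair $\bigl(s\circ p_d^k\circ\Phi(m),\,s\circ p_d^{k+1}\circ\Phi(m)\bigr)$ lies in the branching cone $C_{H_k,H_{k+1}}$ by the definition (\ref{branching cone}), and $s\circ\Phi(m)\in\square$ by definition.

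For the reverse inclusion I argue by induction on $d$. The base case $d=1$ is the non-abelian convexity theorem for $\Phi$. For the inductive step, let
$$\tilde F \colon= (s\circ p_d^1,\ldots,s\circ p_d^{d-1},s)\colon \mathfrak{g}^* \longrightarrow \mathfrak{t}_1^*\oplus\cdots\oplus\mathfrak{t}_d^*,$$
so that $F=\tilde F\circ\Phi$. Since $\tilde F$ is $G$-invariant and $\Phi(M)=\bigcup_{\xi\in\square}\mathcal{O}_\xi$, we have $F(M)=\bigcup_{\xi\in\square}\tilde F(\mathcal{O}_\xi)$. For each $\xi\in\square$ the orbit $\mathcal{O}_\xi$ is a compact Hamiltonian $H_{d-1}$-manifold with momentum map $p_d^{d-1}|_{\mathcal{O}_\xi}$, and the identity $p_d^k = p_{d-1}^k\circ p_d^{d-1}$ shows that the first $d-1$ components of $\tilde F|_{\mathcal{O}_\xi}$ coincide with the Thimm map built on $\mathcal{O}_\xi$ from the shortened chain $\mathfrak{h}_1\subseteq\cdots\subseteq\mathfrak{h}_{d-1}$, while the last component is constantly $\xi$. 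The inductive hypothesis then describes $\tilde F(\mathcal{O}_\xi)$ as the set of $(\xi_1,\ldots,\xi_{d-1},\xi)$ satisfying (\ref{inequalities 1}) for $k=1,\ldots,d-2$ together with the inequalities cutting out the $H_{d-1}$-momentum set of $\mathcal{O}_\xi$. By the very definition (\ref{branching cone}) of $C_{H_{d-1},H_d}$, that momentum set is the slice $\{\eta:(\eta,\xi)\in C_{H_{d-1},H_d}\}$, i.e.\ (\ref{inequalities 1}) with $k=d-1$ and $\xi$ fixed. Taking the union over $\xi\in\square$ produces exactly the set defined by (\ref{inequalities 1}) and (\ref{inequalities 2}).

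The main subtlety is organising the induction: one must recognise that the restriction of the Thimm map to a coadjoint orbit $\mathcal{O}_\xi$ is itself a Thimm map for the truncated chain on the compact Hamiltonian $H_{d-1}$-manifold $\mathcal{O}_\xi$, and that the final inductive input, the branching-cone description of the $H_{d-1}$-momentum set of $\mathcal{O}_\xi$, is built directly into the definition of $C_{H_{d-1},H_d}$ and hence requires no separate argument beyond the non-abelian convexity theorem used to define the branching cone itself.
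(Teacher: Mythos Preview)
Your proof is correct and follows essentially the same approach as the paper: apply Theorem \ref{main theorem} for convexity and fibre connectedness (after checking properness of $F$ via $\mathfrak{h}_d=\mathfrak{g}$), then identify $F(M)$ with the set $\Delta$ cut out by the branching and momentum-set inequalities using the relations $p_d^k=p_{k+1}^k\circ p_d^{k+1}$ and the definition of the branching cones. The only difference is organizational: the paper proves $\Delta\subseteq F(M)$ by a direct iterative construction (given $(\xi_1,\ldots,\xi_d)\in\Delta$, successively choose $h_k\in H_k$ with $Ad_{h_k}^*\xi_k=p_{k+1}^k(Ad_{h_{k+1}}^*\xi_{k+1})$ to produce a preimage), whereas you package the same iteration as an induction on $d$ by restricting to coadjoint orbits $\mathcal{O}_\xi$ and invoking the statement for the truncated chain; unwinding your induction reproduces exactly the paper's iterative choices.
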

	
	\begin{proof} Since $\Phi$ is proper and $\mathfrak{h}_d = \mathfrak{g}$, the map $F$ is proper, so it follows by Theorem \ref{main theorem} that $F(M)$ is convex and the fibres of $F$ are connected.  It remains to show that $F(M)$ is equal to the set $\Delta\subseteq \mathfrak{t}_1^* \oplus \cdots \oplus \mathfrak{t}_d^*$ defined by the inequalities (\ref{inequalities 1}) and (\ref{inequalities 2}).
		
		Suppose that $(\xi_1, \ldots , \xi_d) = F(m)$ for some $m\in M$.  Since $\xi_d = s\circ \Phi(m)$, the inequalities (\ref{inequalities 2}) are satisfied.  For each $1\leq k< d$, since  $p_d^{k} = p_{k+1}^{k} \circ p_{d}^{k+1} $, it is true that $\xi_{k} \in p_{k+1}^{k}(H_{k+1} \cdot p_d^{k+1}(\xi_d))$, so the inequalities (\ref{inequalities 1}) are satisfied. Thus $F(M) \subseteq \Delta$
		
		The proof that $\Delta \subseteq F(M)$ follows from a repeated application of the branching inequalities.
		
		Let $(\xi_1, \ldots , \xi_d) \in \Delta$. Since $\xi_{1}, \xi_{2}$ satisfy the inequalities (\ref{inequalities 2}), we know that $\xi_{1} \in p_{2}^{1}(H_2\cdot \xi_2)$. Thus there exists $h_{2} \in H_2$ such that $\xi_{1} = p_2^1(Ad_{h_2}^* \xi_{2})$. Similarly, for each $2\leq k < d$, since $\xi_{k},\xi_{k+1}$ satisfy the inequalities (\ref{inequalities 1}), for each $h_k\in H_k$, there exists  $h_{k+1} \in H_{k+1}$  such that $Ad_{h_{k}}^*\xi_{k} = p_{k+1}^{k}(Ad_{h_{k+1}}^*\xi_{k+1})$. 
		
		Continuing in this fashion, we find $h_d \in H_d$ such that $s\circ p_d^k(Ad^*_{h_d}\xi_d) = \xi_k$ for all $1\leq k <d$. Finally, since $\xi_d$ satisfies the inequalities (\ref{inequalities 2}) there exists $m\in M$ such that $ \Phi(m) = Ad^*_{h_d}\xi_d$ and it follows that $(\xi_1, \ldots , \xi_d) = F(m)$.	
	\end{proof}

\subsection{Smooth fibres and Gelfand-Zeitlin systems}\label{s: smooth fibres}

If $\mu$ is the momentum map for a completely integrable Hamiltonian torus action, then the connected components of the fibres of $\mu$ are isotropic tori that are generically Lagrangian. In comparison, if a map $F$ constructed by Thimm's trick with Guillemin and Sternberg's action coordinates generates a completely integrable torus action on the open dense subset $\mathcal{U}$ then very little is known about the fibres of $F$  in the complement of $\mathcal{U}$.  In this subsection we give a short proof that if the torus action generated by $F$ is completely integrable, then the fibres in the complement of $\mathcal{U}$ are smooth embedded submanifolds.

Recall that	a Hamiltonian $G$-manifold $M$ is \emph{multiplicity free} if the Poisson subalgebra of $G$-invariant functions, $C^{\infty}(M)^G$, is abelian.

\begin{proposition}\label{woodward characterization 1} \cite[Proposition A.1]{woodward1} Let $G$ be a compact, connected Lie group and let $(M,\omega,\Phi)$ be a connected Hamiltonian $G$-manifold with $\Phi$ proper\footnote{In \cite{woodward1} it is  assumed that $M$ is compact. This assumption can be removed using the non-abelian convexity theorem for proper momentum maps \cite[Theorem 1.1]{lmtw}.}. Then
	\begin{enumerate}
		\item $M$ is multiplicity free if and only $G$ acts transitively on $\Phi^{-1}(\mathcal{O}_{\xi})$ for every $\xi \in \Phi(M)$.
		\item If in addition $G$ acts locally freely on a dense set then $M$ is multiplicity free if and only if $\dim(M) = \dim(G) + \text{rank}(G)$.
	\end{enumerate}
\end{proposition}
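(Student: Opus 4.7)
My plan follows the outline of Woodward's proof of the compact case \cite[Proposition A.1]{woodward1}, with the compactness of $M$ relaxed to properness of $\Phi$ via the non-abelian convexity theorem for proper momentum maps \cite[Theorem 1.1]{lmtw}. The argument splits naturally into the two claims.

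For (1), I would invoke Sjamaar-Lerman singular symplectic reduction, which identifies $C^\infty(M)^G$ with a Poisson algebra whose symplectic leaves are the reduced spaces $M_\xi = \Phi^{-1}(G\cdot \xi)/G$. Multiplicity freeness is then equivalent to every $M_\xi$ being zero dimensional. Since properness of $\Phi$ ensures that each $\Phi^{-1}(G\cdot \xi)$ is connected (by \cite[Theorem 1.1]{lmtw}), zero-dimensionality of $M_\xi$ is equivalent to $G$ acting transitively on $\Phi^{-1}(G\cdot \xi)$. The nontrivial direction, that commutativity of $C^\infty(M)^G$ forces the reduced spaces to be points, requires constructing enough $G$-invariant smooth functions to separate $G$-orbits inside a single fiber, and I would do this using bump functions in the Marle-Guillemin-Sternberg normal form. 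These bump functions extend to globally defined $G$-invariant smooth functions on $M$ provided the relevant slices have compact closure, which follows from properness of $\Phi$ even without compactness of $M$.

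For (2), suppose that $G$ acts locally freely on a dense open subset $V\subseteq M$. For $m\in V$ the infinitesimal stabilizer $\mathfrak{g}_m$ is trivial, so the image of $d\Phi_m$ equals the annihilator of $\mathfrak{g}_m$, namely all of $\mathfrak{g}^*$. Thus $\Phi$ is a submersion at $m$, so $\Phi(M)$ has non-empty interior in $\mathfrak{g}^*$; in particular $s\circ\Phi(M)$ meets the interior of $\mathfrak{t}_+^*$. By Theorem \ref{principal stratum}, the principal stratum $\sigma$ must therefore be $(\mathfrak{t}_+^*)^{\mathrm{int}}$, so $T_\sigma = T$. Standard Marsden-Weinstein reduction at a generic $\xi\in \sigma$ then yields
\begin{equation*}
\dim M_\xi \;=\; \dim \Phi^{-1}(\xi) - \dim T \;=\; \dim M - \dim G - \mathrm{rank}(G).
\end{equation*}
By part (1), multiplicity freeness is equivalent to $M_\xi$ being zero dimensional for generic $\xi$, i.e. $\dim M = \dim G + \mathrm{rank}(G)$.

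The main obstacle is confirming that the Sjamaar-Lerman reduction theory used in (1) carries over without the compactness of $M$. The key observation is that the local normal form at any point of $\Phi^{-1}(\xi)$ is intrinsic and requires no global compactness, and the standard arguments that glue $G$-invariant functions so as to separate orbits use only that compactly supported invariants exist on arbitrarily small $G$-saturated neighborhoods. Since $\Phi$ is proper, every such neighborhood can be chosen as the preimage of a small neighborhood of $\xi\in\mathfrak{g}^*$ with compact closure in $M$, so the classical arguments go through verbatim.
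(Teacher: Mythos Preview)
The paper does not give its own proof of this proposition: it is simply quoted from \cite[Proposition~A.1]{woodward1}, with the footnote indicating that Woodward's compactness hypothesis can be weakened to properness of $\Phi$ by invoking the non-abelian convexity theorem for proper momentum maps \cite[Theorem~1.1]{lmtw}. Your proposal is therefore not competing against an argument in the paper but rather filling in exactly what the citation and footnote point to, and in that respect it is on target: you follow Woodward's outline and isolate the one place where compactness of $M$ was used (connectedness of the fibres $\Phi^{-1}(\mathcal{O}_\xi)$), replacing it by the proper convexity theorem.

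One point in part~(2) deserves tightening. You compute $\dim M_\xi = \dim M - \dim G - \mathrm{rank}(G)$ for \emph{generic} $\xi$ in the open chamber and then write ``by part~(1), multiplicity freeness is equivalent to $M_\xi$ being zero dimensional for generic $\xi$.'' Part~(1) as stated gives the equivalence with transitivity on \emph{every} fibre, not only the generic ones. The missing half-line is: if the generic reduced space is a point, then on the open dense set $\Phi^{-1}(\Sigma_\sigma)$ every $G$-invariant function factors through $s\circ\Phi$ and hence any two $G$-invariant functions Poisson commute there; by density and continuity they commute on all of $M$, so $M$ is multiplicity free, and then part~(1) upgrades this to transitivity on every fibre. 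With that sentence inserted your argument is complete and matches what the paper's footnote is gesturing at.
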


In particular, if $G$ is a torus acting effectively on $M$ with principal orbit-type $(1)$, then $M$ is multiplicity free if and only if $\dim(M) = 2\dim(G)$. 

\begin{proposition}\label{gs completely integrable}
	Let $(M,\omega,\Phi)$ be a connected Hamiltonian $G$-manifold with $\Phi$ proper. If $M$ admits a collective completely integrable system then it must be multiplicity free.
\end{proposition}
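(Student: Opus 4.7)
The plan is a dimension count at a generic point, together with a Poisson-bracket continuity argument to promote the resulting generic statement to multiplicity-freeness of $M$. Suppose $f_1,\ldots,f_n\in C^\infty(\mathfrak{g}^*)$ Poisson commute and that their pullbacks are functionally independent on a dense open subset $\Omega\subseteq M^{2n}$. I would pick $m\in\Omega$ of principal orbit type, set $\xi=\Phi(m)$, let $\mathfrak{g}_m\subseteq\mathfrak{g}_\xi$ be the stabiliser Lie algebras (the inclusion is by equivariance of $\Phi$), and let $W=\mathrm{span}\{df_1(\xi),\ldots,df_n(\xi)\}\subseteq\mathfrak{g}$ under the identification $T_\xi^*\mathfrak{g}^*\cong\mathfrak{g}$.

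First I translate both hypotheses into linear-algebraic conditions on $W$. The momentum map equation gives $d\Phi_m(T_mM)=\mathfrak{g}_m^\perp$, so functional independence of the $\Phi^*df_i$ at $m$ is equivalent to the $df_i(\xi)$ being linearly independent modulo $\mathfrak{g}_m$, giving $\dim W-\dim(W\cap\mathfrak{g}_m)\geq n$. Simultaneously, $\{f_i,f_j\}(\xi)=\langle\xi,[df_i,df_j]\rangle=0$ says that $W$ maps to an isotropic subspace of $\mathfrak{g}/\mathfrak{g}_\xi$ equipped with the Kostant--Kirillov--Souriau symplectic form, giving $\dim W-\dim(W\cap\mathfrak{g}_\xi)\leq\tfrac{1}{2}(\dim G-\dim G_\xi)$. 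Using $W\cap\mathfrak{g}_m\subseteq W\cap\mathfrak{g}_\xi\subseteq\mathfrak{g}_\xi$ to bound $\dim(W\cap\mathfrak{g}_\xi)-\dim(W\cap\mathfrak{g}_m)\leq\dim G_\xi-\dim G_m$ and eliminating $\dim W$, these combine into
$$ \dim M = 2n \leq \dim G + \dim G_\xi - 2\dim G_m. $$
Pairing this with the general bound $\dim M\geq\dim G+\dim G_\xi-\dim G_m$ (from $G\cdot m\subseteq\Phi^{-1}(\mathcal{O}_\xi)$ and $\dim\Phi^{-1}(\mathcal{O}_\xi)=\dim M-\dim G_\xi$ at principal orbit type) forces $\dim G_m=0$ and $\dim M=\dim G+\dim G_\xi$. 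In particular $\dim(G\cdot m)=\dim\Phi^{-1}(\mathcal{O}_\xi)$, and since $G\cdot m$ is compact (hence closed) and open in the connected fibre $\Phi^{-1}(\mathcal{O}_\xi)$ --- connectedness coming from the fibre-connectedness of $s\circ\Phi$ in the non-abelian convexity theorem --- I conclude $G\cdot m=\Phi^{-1}(\mathcal{O}_\xi)$ for every such generic $m$.

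To upgrade this generic transitivity to multiplicity-freeness of $M$ I would argue at the level of the Poisson algebra $C^\infty(M)^G$. For any $h,h'\in C^\infty(M)^G$ and any generic $m$ as above, the submanifold $G\cdot m=\Phi^{-1}(\mathcal{O}_{\Phi(m)})$ is coisotropic (as the preimage under a momentum map of a coadjoint orbit, or directly from the dimension count $\dim(G\cdot m)=\dim G\geq\tfrac{1}{2}\dim M$). Since $h$ is $G$-invariant, $X_h\in (T_m(G\cdot m))^\omega\subseteq T_m(G\cdot m)$, and since $h'$ is constant on $G\cdot m$, $dh'$ annihilates $T_m(G\cdot m)$, so $\{h,h'\}(m)=dh'(X_h)(m)=0$. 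Continuity of the Poisson bracket then yields $\{h,h'\}=0$ on all of $M$, showing $C^\infty(M)^G$ is Poisson-abelian.

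The step I expect to be most delicate is the dimension bookkeeping in the key inequality --- correctly relating $\dim W$, $\dim(W\cap\mathfrak{g}_m)$, and $\dim(W\cap\mathfrak{g}_\xi)$ via the chain $\mathfrak{g}_m\subseteq\mathfrak{g}_\xi$ without losing sharpness, since this is exactly what makes the two bounds on $\dim M$ meet. A secondary subtlety is the passage from generic transitivity to global multiplicity-freeness; I deliberately sidestep any attempt to extend Proposition~\ref{woodward characterization 1}(1) pointwise to every $\xi\in\Phi(M)$ (which would require a study of how $G\cdot m$ behaves as stabiliser dimensions jump) in favour of the Poisson-bracket continuity argument above.
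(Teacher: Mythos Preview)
Your overall strategy---a dimension count at a generic point to force transitivity of $G$ on $\Phi^{-1}(\mathcal{O}_\xi)$, followed by a Poisson-bracket continuity argument---is correct and is essentially the content of the Guillemin--Sternberg argument in \cite{gs3} that the paper cites. The paper's own proof does not reproduce this count; it instead observes that the ``cleanness'' hypothesis of \cite{gs3} holds on the open dense set $\Phi^{-1}(G\cdot\square^{\mathrm{rel\text{-}int}})$ (using the non-abelian convexity theorem and the symplectic cross-section theorem) and then invokes \cite{gs3} as a black box. So your route is more self-contained, at the cost of redoing work already in the literature.

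There is, however, a genuine error in your lower bound. The formula $\dim\Phi^{-1}(\mathcal{O}_\xi)=\dim M-\dim G_\xi$ presupposes that $\Phi$ is transverse to $\mathcal{O}_\xi$, which (since $\mathrm{im}\,d\Phi_m=\mathfrak{g}_m^\perp$ and $T_\xi\mathcal{O}_\xi=\mathfrak{g}_\xi^\perp$ with $\mathfrak{g}_m\subseteq\mathfrak{g}_\xi$) is equivalent to $\dim G_m=0$---precisely what you are trying to deduce. The correct formula at principal orbit type is
\[
\dim\Phi^{-1}(\mathcal{O}_\xi)=\dim M-\dim G_\xi+\dim G_m,
\]
which together with $G\cdot m\subseteq\Phi^{-1}(\mathcal{O}_\xi)$ gives only $\dim M\geq\dim G+\dim G_\xi-2\dim G_m$. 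Combined with your (correct) upper bound $\dim M\leq\dim G+\dim G_\xi-2\dim G_m$, this yields \emph{equality} rather than $\dim G_m=0$. Fortunately, equality is exactly what you need: it gives $\dim(G\cdot m)=\dim\Phi^{-1}(\mathcal{O}_\xi)$, so $G\cdot m$ is open and closed in the connected set $\Phi^{-1}(\mathcal{O}_\xi)$, hence equal to it. Your final coisotropy argument then goes through unchanged via your first justification (that $\Phi^{-1}(\mathcal{O}_\xi)$ is coisotropic as the preimage of a coadjoint orbit), but the alternative justification $\dim(G\cdot m)=\dim G$ should be dropped, since $\dim G_m=0$ has not been established.
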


\begin{proof} This theorem is proven in \cite[p. 223]{gs3} under a cleanness assumption: the image $W = \Phi(M)$ is a submanifold of $\mathfrak{g}^*$ and the map $\Phi\colon M \rightarrow W$ is a submersion \cite[p. 221]{gs3}. 

If $\Phi$ is proper then by the non-abelian convexity theorem for proper momentum maps \cite[Theorem 1.1]{lmtw}, $\square = \Phi(M) \cap \mathfrak{t}_+^*$ is convex. By Proposition \ref{principal stratum}, the $\square^{\text{rel-int}}$ is contained in the principal stratum $\sigma$ corresponding to $M$, so the set $G\cdot \square^{\text{rel-int}}$  is a submanifold of $\mathfrak{g}^*$. It follows from the symplectic cross-section theorem that $\Phi^{-1}(G\cdot \square^{\text{rel-int}})$ is an open dense subset of $M$ and the restricted map $\Phi\colon \Phi^{-1}(G\cdot \square^{\text{rel-int}}) \rightarrow G\cdot \square^{\text{rel-int}}$ is a submersion.  Thus $(\Phi^{-1}(G\cdot \square^{\text{rel-int}}),\omega,\Phi)$ satisfies the cleanness assumption.

Applying \cite[p. 223]{gs3}, it follows that $\Phi^{-1}(G\cdot \square^{\text{rel-int}})$ is multiplicity free so by continuity,  $M$ is multiplicity free. 
\end{proof}

\begin{proposition}\label{smooth fibres}
	Let $(M,\omega,\Phi)$ be a connected Hamiltonian $U(n)$ or $SO(n)$-manifold with $\Phi$ proper and let $F$ be a map constructed by Thimm's trick from a chain of subalgebras \eqref{gelfand-zeitlin chains}. 
	If the torus action generated by $F$ on the open dense subset $\mathcal{U}$ is completely integrable, then the fibres of $F$ are connected, embedded submanifolds.
\end{proposition}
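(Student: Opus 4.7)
Connectedness of the fibres follows from Proposition \ref{branching system}, which already treats the chain case $\mathfrak{h}_1\subseteq\cdots\subseteq\mathfrak{h}_d=\mathfrak{g}$ under the hypothesis that $\Phi$ is proper. It remains to establish the submanifold property.

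By Proposition \ref{gs completely integrable}, the hypothesis that $F$ generates a completely integrable torus action on $\mathcal{U}$ forces $M$ to be multiplicity free as a $G$-manifold. Proposition \ref{woodward characterization 1}(1) then shows that for every $\xi\in\Phi(M)$ the preimage $\Phi^{-1}(\mathcal{O}_\xi)$ is a single $G$-orbit $G/G_m$, and the restricted map $\Phi\colon G/G_m\to G/G_\xi=\mathcal{O}_\xi$ is a $G$-equivariant smooth fibre bundle. Since
\[
F^{-1}(\xi_1,\ldots,\xi_d)\;=\;\Phi^{-1}\bigl(\mathcal{F}(\xi_1,\ldots,\xi_d)\bigr),
\]
where $\mathcal{F}(\xi_1,\ldots,\xi_d):=\{\eta\in\mathcal{O}_{\xi_d}\colon s\circ p_d^k(\eta)=\xi_k,\ 1\le k\le d-1\}$, the preimage of any embedded submanifold of $\mathcal{O}_{\xi_d}$ under this bundle is an embedded submanifold of $M$. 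It therefore suffices to prove that $\mathcal{F}(\xi_1,\ldots,\xi_d)$ is an embedded submanifold of $\mathcal{O}_{\xi_d}$.

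I would establish this by descending induction on $k$, setting $\mathcal{N}_d:=\mathcal{O}_{\xi_d}$ and recursively defining the partial Gelfand--Zeitlin fibre $\mathcal{N}_k:=\{\eta\in\mathcal{N}_{k+1}\colon s\circ p_d^k(\eta)=\xi_k\}$, so that $\mathcal{N}_1=\mathcal{F}(\xi_1,\ldots,\xi_d)$. The key classical input is that for the pairs $U(k)\subseteq U(k+1)$ and $SO(k)\subseteq SO(k+1)$, every coadjoint orbit of $H_{k+1}$ is multiplicity free as an $H_k$-manifold. By Proposition \ref{woodward characterization 1}(1), this implies that $(p_{k+1}^k)^{-1}(H_k\cdot\xi_k)\cap\mathcal{O}_{\xi_{k+1}}$ is a single $H_k$-orbit in $\mathcal{O}_{\xi_{k+1}}$, and hence a closed embedded submanifold since $H_k$ is compact. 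Using the factorization $p_d^k=p_{k+1}^k\circ p_d^{k+1}$ together with the fact, furnished by the previous induction step, that $p_d^{k+1}$ restricts to an $H_{k+1}$-equivariant surjective submersion $\mathcal{N}_{k+1}\to\mathcal{O}_{\xi_{k+1}}$, one identifies $\mathcal{N}_k$ with the preimage of this embedded orbit under the submersion, and therefore as an embedded submanifold of $\mathcal{N}_{k+1}$.

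The main obstacle is verifying that $p_d^{k+1}|_{\mathcal{N}_{k+1}}$ really is a surjective submersion onto the full $H_{k+1}$-coadjoint orbit $\mathcal{O}_{\xi_{k+1}}$, particularly at those steps where $\xi_{k+1}$ lies on a wall of $\mathfrak{t}_{k+1,+}^*$ and the stabilizer $(H_{k+1})_{\xi_{k+1}}$ enlarges. This is where the compactness of the $H_k$ becomes essential: it ensures that every intermediate orbit is a closed embedded submanifold, while multiplicity freeness supplied by the classical branching rules guarantees that each $\mathcal{N}_k$ is assembled from single group orbits, so that the iterated bundle structure carrying the submanifold property is preserved at every level.
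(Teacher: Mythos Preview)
Your approach is correct and essentially matches the paper's: both reduce to showing that Gelfand--Zeitlin fibres inside coadjoint orbits are embedded, then argue inductively via multiplicity freeness at each level of the chain. The paper organizes the induction as a recursion on coadjoint orbits (treating $\mathcal{O}_{\xi_k}$ as a multiplicity-free Hamiltonian $H_{k-1}$-manifold with proper momentum map $p_k^{k-1}$ and repeating the first reduction step verbatim), whereas you stay inside the fixed orbit $\mathcal{O}_{\xi_d}$ and track nested sets $\mathcal{N}_k$; these are equivalent bookkeeping choices for the same argument.

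The ``main obstacle'' you flag in your final paragraph is not a genuine obstacle, and the induction you already outlined resolves it uniformly. Assuming $p_d^{k+1}|_{\mathcal{N}_{k+1}}\colon\mathcal{N}_{k+1}\to\mathcal{O}_{\xi_{k+1}}$ is a surjective submersion, the set $Z_k:=(p_{k+1}^k)^{-1}(\mathcal{O}_{\xi_k})\cap\mathcal{O}_{\xi_{k+1}}$ is a single $H_k$-orbit by multiplicity freeness of $\mathcal{O}_{\xi_{k+1}}$ as an $H_k$-manifold, hence embedded; then $\mathcal{N}_k=(p_d^{k+1}|_{\mathcal{N}_{k+1}})^{-1}(Z_k)$ is embedded and $p_d^{k+1}|_{\mathcal{N}_k}\colon\mathcal{N}_k\to Z_k$ is again a submersion. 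Finally $p_{k+1}^k|_{Z_k}\colon Z_k\to\mathcal{O}_{\xi_k}$ is a submersion because any smooth $H_k$-equivariant surjection between transitive $H_k$-spaces is one. Composing closes the induction for $p_d^k|_{\mathcal{N}_k}$, and none of this depends on the size of the stabilizer $(H_{k+1})_{\xi_{k+1}}$ or on whether $\xi_{k+1}$ lies on a wall.
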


\begin{remark}
	If a Hamiltonian torus action is not integrable, then it is possible that its fibres are not smooth. For example, the action of $S^1$ on $\mathbb{C}^2$ defined by $e^{i\theta}\cdot\left(z,w\right) = \left(e^{i\theta}z,e^{-i\theta}w\right)$ is generated by the momentum map $\mu(z,w) = \pi|z|^2-\pi|w|^2$, and the fiber $\mu^{-1}(0)$ is not an embedded submanifold of $\mathbb{C}^2$ (it is a cone).
\end{remark}
	
\begin{proof} Let $G = U(n)$ or $SO(n)$ and let $(M,\omega,\Phi)$ be a connected Hamiltonian $G$-manifold. Since $\Phi$ is proper and $\mathfrak{h}_n = \mathfrak{g}$, the map $F$ is proper. Thus by Theorem \ref{main theorem}, the fibres of $F$ are connected. 

Let
	\begin{equation}\label{gs map}
		F = \left( s\circ p_n^1\circ \Phi , \ldots, s\circ p_n^{n-1}\circ\Phi,s\circ\Phi\right) \colon M \longrightarrow \mathfrak{t}_1^* \oplus  \cdots \oplus \mathfrak{t}_{n}^*,
	\end{equation}
	be the map constructed by Thimm's trick with Guillemin and Sternberg's action coordinates from the chain \eqref{gelfand-zeitlin chains} and fix $(\xi_1, \ldots ,\xi_n) \in F(M)$. Let $\mathcal{O}_{\xi_n}$ be the $G$ coadjoint orbit through $\xi_n$. The fibre $ F^{-1}(\xi_1,\ldots , \xi_n)$ equals the preimage under $\Phi$ of the fiber $H^{-1}(\xi_1,\ldots , \xi_{n-1})$ the map
	\begin{equation}
		H = (s \circ p_{n}^1, \ldots , s \circ p_{n}^{n-1})\colon \mathcal{O}_{\xi_n} \rightarrow \mathfrak{t}_{1}^* \oplus \cdots \oplus \mathfrak{t}_{n-1}^*
	\end{equation}
	(which is a classical Gelfand-Zeitlin system).
	
	By Proposition \ref{gs completely integrable}, $M$ is multiplicity free. By Proposition \ref{woodward characterization 1}, $G$ acts transitively on $\Phi^{-1}(\mathcal{O}_{\xi_n})$, so it is an embedded submanifold. Since $\Phi$ is $G$-equivariant, the restricted map $\Phi\colon \Phi^{-1}(\mathcal{O}_{\xi_n}) \rightarrow \mathcal{O}_{\xi_n}$ is a submersion. Thus if $H^{-1}(\xi_1,\ldots , \xi_{n-1})$ is an embedded submanifold of $\mathcal{O}_{\xi_n}$, it follows that $ F^{-1}(\xi_1,\ldots , \xi_n)$ is an embedded submanifold of $M$.

It remains to show that the preimages $H^{-1}(\xi_1,\ldots , \xi_{n-1}) \subseteq \mathcal{O}_{\xi_n}$ are embedded submanifolds. Since every $U(n)$ coadjoint orbit is a multiplicity free $U(n-1)$-manifold\footnote{This was shown for generic $U(n)$ coadjoint orbits in \cite{gs3}. For arbitrary $U(n)$ and $SO(n)$ coadjoint orbits, this follows from Proposition \ref{gs completely integrable} and the fact that the classical Gelfand-Zeitlin systems are completely integrable (one can also give a direct proof using Proposition \ref{woodward characterization 1}). This in turn follows from the fact that for any coadjoint orbit the $T'$ action of the classical Gelfand-Zeitlin system is effective \cite[Proposition 4.2.2.]{pabiniak}  and the dimension of $T'$ is half the dimension of $\mathcal{O}$ \cite[Proposition 4.3.7.]{pabiniak}.} (respectively, every $SO(n)$ coadjoint orbit is a multiplicity free $SO(n-1)$-manifold), this follows inductively by applying the argument above. 
\end{proof}

\begin{remark}
	The geometry of the classical Gelfand-Zeitlin systems on $U(n)$ coadjoint orbits near these fibres have been studied by Eva Miranda and N.T. Zung \cite{miranda-zung} although no results have been published. A low dimensional example appears in \cite{alamiddine}. The fibres of bending flow systems on moduli spaces of oriented polygons (see Example \ref{bending flow systems}) were studied recently by Damien Bouloc, who showed that they are embedded coisotropic submanifolds and gave an explicit description of their geometry \cite{bouloc}. 
\end{remark}

\section{Examples}\label{s: examples}

In this section we give several examples (mainly in low dimensions) to illustrate the construction and its applications.

\begin{example}[Compact multiplicity free $SU(2)$ 4-manifolds]
	Let $(M^4,\omega)$ be a compact, connected, multiplicity free Hamiltonian $SU(2)$ 4-manifold with momentum map $\Phi$. These spaces are classified by their momentum set and principal isotropy group  \cite{iglesias}\footnote{These results are outlined in Section IV.5 of \cite{audin}.}.  
	
	Fix the maximal torus and positive Weyl chamber 
	\begin{equation}
		\begin{split}
			T &= \left\{ \left(\begin{array}{cc}
			e^{ix} & 0 \\ 0 & e^{-ix} 
		\end{array}\right) \colon x \in \mathbb{R} \right\},\\
		\mathfrak{t} &= \left\{\left(\begin{array}{cc}
			ix & 0 \\ 0 & -ix 
		\end{array}\right) \colon x \in \mathbb{R} \right\} \cong \mathbb{R},\, 
		 \mathfrak{t}_+ = \left\{\left(\begin{array}{cc}
			ix & 0 \\ 0 & -ix 
		\end{array}\right) \colon x \geq 0 \right\}\cong \mathbb{R}^+
		\end{split}
	\end{equation}
	and identify $\mathfrak{t} \cong \mathfrak{t}^*$ and $\mathfrak{t}_+ \cong \mathfrak{t}^*_+$ via the nondegenerate form $(X,Y) = \text{tr}(XY)$.  We can construct a Gelfand-Zeitlin system on $M$ from the chain $\mathfrak{t} \subseteq \mathfrak{su}(2)$,
	\begin{equation}
		F = \left(p\circ\Phi,s\circ \Phi\right)\colon M \rightarrow \mathbb{R} \times \mathbb{R}^+ \subseteq \mathbb{R}^2
	\end{equation}
	where $p\colon\mathfrak{su}(2)^* \rightarrow \mathfrak{t}^*$ is the projection and $s\colon \mathfrak{su}(2)^* \rightarrow \mathfrak{t}_+^*$ is the sweeping map.
	The momentum set of $M$ is an interval $s\circ\Phi(M) = [a,b] \subseteq \mathbb{R}^+$. By Proposition \ref{branching system} and the interlacing inequalities, the image of the Gelfand-Zeitlin system on $M$ is the set
	\begin{equation} 
		F(M) = \left\{ (x,y) \in \mathbb{R}^2 \colon \, a \leq y \leq b\text{ and } -y \leq x \leq y\right\}.
	\end{equation}
	If $a = 0$ then the principal isotropy group is trivial and $M$ is isomorphic to $\mathbb{C}P^2$ with the action of $SU(2)$ as a subgroup of $SU(3)$ and $\Phi^{-1}(0)$ is an isolated fixed point for the $SU(2)$ action \cite{iglesias}. One can check that the map $F$ is smooth on $M$ and generates an effective, completely integrable torus action whose weight lattice is \begin{equation}
	 	L^* = \mathbb{Z} \left\langle \left( 0,\frac{1}{\pi}\right),\left( \frac{1}{2\pi},\frac{1}{2\pi} \right) \right\rangle.
	 \end{equation}  
	With respect to the lattice structure, $F(M)$ is equivalent to the standard Delzant triangle of $\mathbb{C}P^2$.
	
	 If $a>0$ then $M$ is a Hirzebruch surface, the principal isotropy group is $\mathbb{Z}_m$ for some positive integer $m$, and $M$ is symplectomorphic to a blow-up of $\mathbb{C}P^2$ if $m$ is odd or $S^2 \times S^2$ if $m$ is even \cite{iglesias}. The map $F$ is smooth on $M$ and one checks that it generates an effective, completely integrable torus action on $M$ whose weight lattice is 
	 \begin{equation}
	 	L_m^* = \mathbb{Z} \left\langle \left( 0,\frac{m}{\pi}\right),\left( \frac{1}{2\pi},\frac{m}{2\pi} \right) \right\rangle
	 \end{equation}
	 if $m$ is odd and 
	 \begin{equation}
	 	L_m^* = \mathbb{Z} \left\langle \left( 0,\frac{m}{2\pi}\right),\left( \frac{1}{\pi},0 \right) \right\rangle.
	 \end{equation}
	 if $m$ is even.
	 Thus one sees that for all $m$, $F(M)$ is equivalent to a standard Delzant polytope of a Hirzebruch surface.
	 
\end{example}

Recall that following \cite{g}, the Gromov width of a symplectic manifold of dimension $2n$ is defined as 
\begin{equation}
	\text{GWidth}(M,\omega) = \sup_{r>0} \left\{ \pi r^2 \colon \exists \text{ a symplectic embedding } B^{2n}(r) \rightarrow M \right\}
\end{equation}
where $B^{2n}(r)$ is the open ball of radius $r>0$ in $\mathbb{R}^{2n}$ with the  standard symplectic structure.  

\begin{example}[$G_2$ coadjoint orbits]  There are four families of $G_2$ coadjoint orbits: the trivial orbit, the regular orbits, and the two one-parameter families of non-regular coadjoint orbits corresponding to maximal parabolic subgroups of $G_2$. Both of the non-regular coadjoint orbits admit Gelfand-Zeitlin systems; one can be viewed as a multiplicity free Hamiltonian $SU(3)$-manifold, for the action of the subgroup $SU(3) \leq G_2$ \cite[Example 7.4]{woodward1}, and the other is isomorphic to a $SO(8)$ coadjoint orbit. Proposition \ref{branching system} was applied in \cite{lane} to prove strict lower bounds for Gromov width of the former coadjoint orbit. The lower bound for the Gromov width of the latter coadjoint orbit follows from \cite{pabiniak1}. 
\end{example}

Another classical example from the integrable systems literature is the construction of integrable systems on cotangent bundles of homogeneous spaces.

\begin{example}[Cotangent bundles]
	Let $G$ act on a manifold $Q$ and consider the cotangent bundle $M = T^*Q$ with its canonical symplectic structure.  The cotangent lift of the action of $G$ is Hamiltonian.
%	, generated by the momentum map $\Phi$ defined by the equation
%	\begin{equation}
%		\langle \Phi(m),X\rangle = \langle m, \underline{X}_{\pi(m)}\rangle
%	\end{equation}
%	where $\pi:T^*Q \rightarrow Q$ is the projection map and $m\in T^*_{\pi(m)}Q$ is a covector. 
	If the momentum map for this action is proper then we can apply Theorem \ref{main theorem} to a map $F$ constructed by Thimm's trick from a list of subalgebras that contains $\mathfrak{g}$.  In particular, if $Q=G/K$ is a homogeneous $G$-manifold, then $\Phi$ is proper and $M$ is a multiplicity free $G$-manifold  if and only if $(G,K)$ is a Gelfand-pair \cite{gs-mult-free}. If $G$ is one of $U(n)$ or $SO(n)$, then one can construct a Gelfand-Zeitlin system on $M$  \cite{gs3}.  
\end{example}

One can construct many interesting examples of non-compact multiplicity free $U(n)$ or $SO(n)$-manifolds by applying non-abelian symplectic cutting to cotangent bundles, as the next example shows.

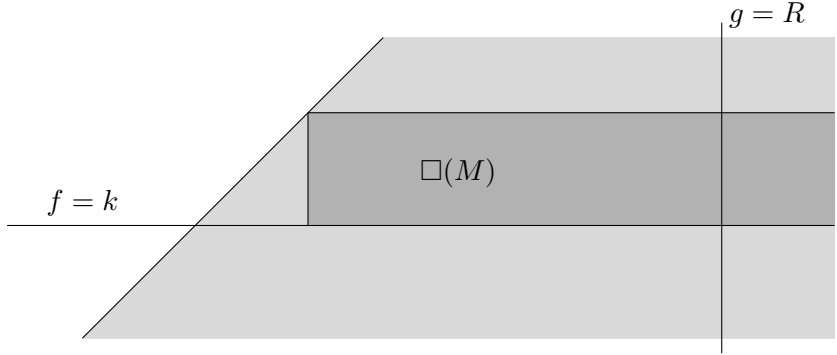
\begin{figure*}[t!]
   \centering
       \begin{tikzpicture}
  \draw [fill=gray!30!white, draw=gray!30!white, line width=0mm] (-2,0) -- (2,4) -- (8,4) -- (8,0) -- (-2,0);
  
  	\draw [fill=gray!60!white, draw=gray!60!white, line width=0mm] (1,3) -- (1,1.5) -- (8,1.5) -- (8,3) -- (1,3);
  	
  	\draw (-3,1.5) -- (8,1.5);
  	
  	\draw (1,3) -- (1,1.5);
  	
  	\draw (-2,0) -- (2,4);
  	
  	\draw (1,3) -- (8,3);
  	
  	\draw (6.5,4.2) -- (6.5,-0.2);
  	
  	\draw (-2,1.8) node{$f=k$};
  	
  	\draw (7.1,4.3) node{$g=R$};
  	
  	\draw (3,2.2) node{$\square(M)$};
	
  \end{tikzpicture}
    \caption{Momentum set of a noncompact multiplicity free $U(2)$-manifold. }
    \label{figure} 
\end{figure*}

\begin{example}[A non-compact $U(2)$-manifold with finite Gromov width] Consider the standard action of $U(2)$ on the unit sphere $S^3 \subseteq \mathbb{C}^2$. The cotangent lift of this action is a Hamiltonian action of $U(2)$ on $T^*S^3$. The momentum map for this action is proper and by Proposition \ref{woodward characterization 1}(2) the action is multiplicity free. Fix the maximal torus	
	\begin{equation}
		T = \left\{ \left( \begin{array}{cc}
			e^{ix_1} & 0 \\
			0 & e^{ix_2}
		\end{array}\right) \colon \, x_1,x_2 \in \mathbb{R}\right\}
	\end{equation}
	and identifications $\mathfrak{t}^* \cong \mathfrak{t} \cong \mathbb{R}^2$, 
	$\mathfrak{t}_+^* \cong \{ (x_1,x_2) \in \mathbb{R}^2 \colon \, x_1 \geq x_2\}$.  The momentum set of $T^*S^3$ is 
	\begin{equation}
		\square(T^*S^3) = \Phi(T^*S^3)\cap \mathfrak{t}_+^* = \left\{ (x_1,x_2) \in \mathbb{R}^2 \colon\,  0 \leq x_1 \text{ and } x_2 \leq 0 \right\}.
	\end{equation}
	Consider the functions
	\begin{equation}
		f,g\colon \mathfrak{t}_+^* \rightarrow \mathbb{R},\, f(x_1,x_2) = -x_2, \, g(x_1,x_2) = x_1.
	\end{equation}
	We can perform non-abelian symplectic cutting \cite{woodward1} with respect to the collective function $f\circ s$ at some level $k >0$ to produce a non-compact, multiplicity free $U(2)$-manifold $(M,\omega)$ with momentum set
	\begin{equation}
		\square(M) = \left\{ (x_1,x_2) \in \mathbb{R}^2 \colon \, 0 \leq x_1 \text{ and } -k \leq x_2 \leq 0  \right\}
	\end{equation}
	(see Figure \ref{figure}). Identify $U(1)$ with the subgroup of diagonal matrices $\text{diag}(e^{i\theta},1)$. The chain of subgroups $U(1) \leq U(2)$ equips $M$ with a Gelfand-Zeitlin system 
	\begin{equation}
		F = (p\circ\Phi,s\circ\Phi) \colon M \rightarrow \mathbb{R} \times \mathbb{R}^2.
	\end{equation}
	By Proposition \ref{branching system}, the image is
	\begin{equation}
		F(M) = \left\{ (x_0,x_1, x_2,) \in \mathbb{R}^3 \colon \,0 \leq x_1,\: -k \leq x_2 \leq 0,\text{ and } x_2 \leq x_0 \leq x_1 \right\}. 
	\end{equation}
	Since $M$ is multiplicity free, the map $F$ generates a completely integrable $T^3$-action on the open dense set $\mathcal{U} = M \setminus F^{-1}(0,0,0)$ (Theorem \ref{gs completely integrable}) and one can check that this action is effective. Combining this with Theorem \ref{main theorem}, it follows that $\left(\mathcal{U},\omega\vert_{\mathcal{U}},F\vert_{\mathcal{U}}\right)$ is a proper toric $T^3$-manifold. Note that with the identifications above, the weight lattice for $T^3$ is $\frac{1}{2\pi} \mathbb{Z}^3 \subseteq \mathbb{R}^3$. 
	
	Let $S\subseteq F(\mathcal{U})$ be the interior of the simplex with vertices $(0,0,0)$, $(0,0,-k)$, $(0,k,-k)$, and $(-k,0,k)$.  Since $\left(\mathcal{U},\omega\vert_{\mathcal{U}},F\vert_{\mathcal{U}}\right)$ is a proper toric $T^3$-manifold, the preimage $F^{-1}(S)$ inherits the structure of a proper toric $T^3$-manifold (in particular, by the convexity theorem for proper Hamiltonian torus manifolds \cite[Theorem 30]{kb}, the submanifold $F^{-1}(S)$ is connected).  By the classification of proper toric manifolds, $F^{-1}(S)$ is isomorphic to the proper toric $T^3$-manifold
	$$(S\times T^3,  \omega_0 =  dx_1\wedge dy_1 + dx_2\wedge dy_2 +dx_3\wedge dy_3, \text{pr}_1).$$
	By \cite[Proposition 2.1]{pabiniak1}, $2\pi k \leq  \text{GWidth}(S\times T^3, \omega_0)$ (in fact, this is an equality). It follows that 
	$$2\pi k \leq \text{GWidth}(F^{-1}(S),\omega\vert_{F^{-1}(S)}) \leq \text{GWidth}(M,\omega).$$

	On the other hand, given a symplectic embedding of a ball $B^6(r)$ into $M$, for every $\rho < r$ there is a $R>0$ such that the image of the closed  ball $\overline{B}^6(\rho)$ is contained in the sublevel set $g\circ s \circ \Phi(x)<R$ (see Figure \ref{figure}). 
	Performing non-abelian symplectic cutting by the collective function $g\circ s$ at the level $R$, we obtain a multiplicity free Hamiltonian $U(2)$-manifold $M_R$ with momentum set
	\begin{equation}
		\square(M_R) = \left\{ (x_1,x_2) \in \mathbb{R}^2 \colon \, 0\leq  x_1 \leq R \text{ and } -k \leq x_2 \leq 0  \right\}.
	\end{equation}
	By the classification of convex multiplicity free manifolds \cite[Theorem 11.2]{knop1} (and since the principal isotropy group for the action of $U(2)$ on $M_R$ is trivial), $M_R$ is isomorphic as a multiplicity free Hamiltonian $U(2)$-manifold to the $U(3)$-coadjoint orbit $(\mathcal{O}_{\lambda},\omega_{\lambda})$ with the same momentum set (this coadjoint orbit can be identified with the set of Hermitian $3\times 3$ matrices whose eigenvalues are $R,0,$ and $-k$).  Thus 
	$$\pi r^2 \leq \text{GWidth}(M_R,\omega) = \text{GWidth}(\mathcal{O}_{\lambda},\omega_{\lambda}) = \min\left\{ 2\pi k, 2\pi R \right\}\leq 2\pi k$$ 
	where the last equality follows from the known upper bound for Gromov width of coadjoint orbits of compact Lie groups \cite{castro}. Thus $\text{GWidth}(M,\omega) \leq 2\pi k$.  
	
	Therefore $\text{GWidth}(M,\omega) = 2\pi k$. In contrast, the Hofer-Zehnder capacity of $M$ is infinite: one can construct a sequence of admissible collective functions on $M$ with unbounded oscillation (see \cite{hofer-zehnder} for definition of the Hofer-Zehnder capacity).
	
\end{example}

If a compact symplectic manifold admits a completely integrable Hamiltonian torus action, then it also admits an invariant K\"ahler structure \cite{delzant}. As the following example demonstrates, this is not true for Gelfand-Zeitlin systems.  

\begin{example}[A G-Z system with no invariant K\"ahler structure] In \cite{tolman}, \newline Tolman constructed a symplectic 6-manifold with a Hamiltonian $T^2$-action that has no invariant K\"ahler structure. Woodward showed that such examples can be obtained as non-abelian symplectic cuttings $M_{\leq a}$ of $U(3)$ coadjoint orbits, considered as Hamiltonian $U(2)$-manifolds; there is no invariant K\"ahler structure for the action of the maximal torus of $U(2)$ on $M_{\leq a}$ \cite[Figure 3]{woodward}.  Being multiplicity free $U(2)$-manifolds, one can use the chain $U(1) \leq U(2)$ (as in the previous example) to construct a Gelfand-Zeitlin system on $M_{\leq a}$. This system generates an effective Hamiltonian $T^3$-action on an open dense subset of $M_{\leq a}$. The maximal torus of $U(2)$ acts on this open dense set as a subtorus of $T^3$, therefore there is no K\"ahler structure on $M_{\leq a}$ that is invariant under the $T^3$-action.

It was observed in \cite[Remark 3.9]{nnu} that the standard complex structure on a $U(n)$ coadjoint orbit is not invariant under the Gelfand-Zeitlin torus action. 
\end{example}

Our last example demonstrates an application of Proposition \ref{reduction 2} and a construction of a map by Thimm's trick that does not use a chain of subalgebras.

\begin{example}[Bending flow systems on moduli spaces]\label{bending flow systems} Let $H$ be a compact, connected Lie group and suppose that $M_1, \ldots, M_n$ are Hamiltonian $H$-manifolds. The symplectic direct sum $M = M_1 \times \cdots \times M_n$ is a Hamiltonian $G = H \times \cdots \times H$-manifold. Let $\mathcal{P}$ be a convex $n$-gon with vertices labelled $1,\ldots, n$ clockwise. Fix a triangulation $\mathcal{T}$ of $\mathcal{P}$.  To the interior edges of $\mathcal{T}$, which we denote $(i,j)$, $i<j$, we can associate subalgebras of the form 
\begin{equation}
  \mathfrak{h}_{i,j} = \left\{ (0,\ldots , 0,X,\ldots ,X,0,\ldots, 0) \in \mathfrak{g} \colon X \in \mathfrak{h} \right\} \cong \mathfrak{h}
\end{equation} 
where the $n$-tuple $(0,\ldots , 0,X,\ldots ,X,0,\ldots, 0)$ is zero in the first $i-1$ entries, equal to $X\in \mathfrak{h}$ in entries $i$ to $j$, and equal to zero in entries $j+1$ to $n$. If $i_1 \leq i_2 < j_2 \leq j_1$, then 
\begin{equation}
  [\mathfrak{h}_{i_1,j_1},\mathfrak{h}_{i_2,j_2}] \subseteq \mathfrak{h}_{i_2,j_2}.
\end{equation}
If $j_1<i_2$, then 
\begin{equation}
  [\mathfrak{h}_{i_1,j_1},\mathfrak{h}_{i_2,j_2}] = \{0\}\subseteq \mathfrak{h}_{i_2,j_2}.
\end{equation}
Thus for any triangulation $\mathcal{T}$, the subalgebras $\mathfrak{h}_{i,j}$, $(i,j) \in \mathcal{T}$ pairwise satisfy condition \ref{condition of thimm}. Further, if $\mathfrak{h}_{1,n}$ is the diagonal subalgebra, then $[\mathfrak{h}_{1,n}, \mathfrak{h}_{i,j}] \subseteq \mathfrak{h}_{i,j}$ for all $i< j$. Assuming properness, e.g. if $M$ is compact, we can apply Proposition \ref{reduction 2} to show that the map $\tilde F$ induced on the diagonally reduced space $M \sslash_0 H$ has convex image and connected fibres. 

For example, if $M_k = S^2_{r_k}$ is the sphere of radius $r_k>0$ and $H = SO(3)$, then this construction recovers the bending flow system on the moduli space of oriented polygons in $\mathbb{R}^3$, which can also be obtained as a reduction of the standard Gelfand-Zeitlin systems on coadjoint orbits \cite{hk,km}.  The fact that the open dense subsets where the bending flows generate Hamiltonian torus actions are proper toric manifolds was used in \cite{mandini} to prove lower bounds on the Gromov width of these moduli spaces.  The fibres of these systems have been studied by \cite{bouloc}. 

\end{example}

\section{Thimm's trick and symplectic contraction}\label{s:symplectic contraction}

A connection between maps constructed by Thimm's trick with Guillemin and Sternberg's action coordinates and maps constructed by contraction/degeneration was recently explored in \cite{hmm}.  Given a Hamiltonian $G$-manifold $(M,\omega,\Phi)$, Hilgert-Martens-Manon define a \emph{symplectic contraction map} $\Phi_M$ which is a continuous, surjective, and proper map from $M$ to a singular space $M^{sc}$ called the \emph{symplectic contraction of} $M$. $M^{sc}$ is the symplectic analogue of the horospherical contraction/degeneration of a reductive group action due to Popov and Vinberg \cite{pop}.  $M^{sc}$ is equipped with an action of $G\times T$ that is Hamiltonian in the appropriate sense for these singular spaces. The symplectic contraction map mimics the time-1 flow of the gradient-Hamiltonian vector field of a horospherical contraction/degeneration. The space $M^{sc}$ comes equipped with a continuous map $\mu_{\mathbb{T}}\colon M^{sc} \rightarrow \mathfrak{t}^*$ -- which is a momentum map (in the appropriate sense) for the action of $T$ on $M^{sc}$ -- such that the following diagram commutes

	\begin{equation*}
		\begin{CD}
			M @>\Phi_M >> M^{sc}\\
			@V\Phi VV @VV\mu_{\mathbb{T}}V\\
			\mathfrak{g}^* @>s >> \mathfrak{t}^*.
		\end{CD}
	\end{equation*}

	By construction, the space $M^{sc}$ is stratified by smooth symplectic manifolds.  If $\sigma \subseteq \mathfrak{t}_+^*$ is the principal stratum corresponding to $M$, then $\mu_{\mathbb{T}}^{-1}(\sigma)$ contains the open dense stratum of $M^{sc}$ and the restriction of $\Phi_M$ to the intersection of  $\Phi^{-1}(\Sigma_{\sigma})$ with the principal orbit-type stratum of the action of $G$ on $M$ is a symplectomorphism onto this stratum \cite[Proposition 4.3]{hmm}.

Given a chain of subgroups $H_1 \leq \cdots \leq H_d \leq G$ (or more generally, a chain of homomorphisms), Hilgert-Martens-Manon also define a \emph{branching contraction map} $\Phi_M$ and \emph{a branching contraction space} $M^{sc}$ with similar properties as before: the map $\Phi_M$ is continuous, surjective, and proper and there is an open dense submanifold on which the restriction of  $\Phi_M$ is a symplectomorphism \cite[Proposition 7.16]{hmm}. Further, there is a continuous map $\mu_{\mathbb{T}}\colon M^{sc}\rightarrow \mathfrak{t}_1^* \oplus \cdots \oplus \mathfrak{t}_d^*$ such that the following diagram commutes
	\begin{equation}\label{thimm:branching}
		\begin{CD}
			M @>\Phi_M >> M^{sc}\\
			@V \Phi VV @VV\mu_{\mathbb{T}} V\\
			\mathfrak{g}^* @>\left(s\circ p_1, \ldots ,s\circ p_d\right) >> \mathfrak{t}^*_{1}\oplus \cdots \oplus \mathfrak{t}_{d}^* 
		\end{CD}
	\end{equation}
	where the composition of $\Phi$ with $\left(s\circ p_1, \ldots ,s\circ p_d\right)$ is precisely the map $F$ constructed by Thimm's trick with Guillemin and Sternberg's action coordinates from the chain of subalgebras $\mathfrak{h}_1\subseteq \cdots \subseteq \mathfrak{h}_d$ \eqref{continuous map}.
	
	In this case, if for each $1\leq k \leq d$, $\sigma_k$ is the principal stratum of $\mathfrak{t}_{k,+}^*$ corresponding to the induced action of $H_k$ on $M$, then $\mu_{\mathbb{T}}^{-1}(\sigma_1 \times \cdots \times \sigma_d)$ contains the open dense stratum of the branching contraction space $M^{sc}$ and the restriction of $\Phi_M$ to the intersection of $\mathcal{U} = F^{-1}(\sigma_1 \times \cdots \times \sigma_d)$ with the principal orbit-type stratum for the action of $G$ is a symplectomorphism onto this open dense stratum \cite[Proposition 7.16]{hmm}. Since the diagram \eqref{thimm:branching} commutes and the contraction map $\Phi_M$ is continuous and surjective, we note that convexity and fibre connectedness for the map $F$ (Theorem \ref{main theorem}) is equivalent to convexity and fibre connectedness for the map $\mu_{\mathbb{T}}$ if either of these maps are proper (The fact that $F(M)$ is convex if and only if $\mu_{\mathbb{T}}(M^{sc})$ is convex follows immediately by surjectivity of $\Phi_M$. The fact that the fibres of $F$ are connected iff the fibres of $\mu_{\mathbb{T}}$ are connected follows by Lemma \ref{marginal connectedness} and the fact that the restriction of $\Phi_M$ to the open dense intersection of $\mathcal{U}$ and the principal orbit type stratum is a homeomorphism onto its image).

\providecommand{\bysame}{\leavevmode\hbox to3em{\hrulefill}\thinspace}
\providecommand{\MR}{\relax\ifhmode\unskip\space\fi MR }
% \MRhref is called by the amsart/book/proc definition of \MR.
\providecommand{\MRhref}[2]{%
  \href{http://www.ams.org/mathscinet-getitem?mr=#1}{#2}
}
\providecommand{\href}[2]{#2}

\end{document}